\numberwithin{equation}{section}
\theoremstyle{plain}
\newtheorem{theorem}{Theorem}[section]
\newtheorem{theoremintro}{Theorem}
\newtheorem{lemma}[theorem]{Lemma}
\newtheorem{prop}[theorem]{Proposition}
\newtheorem{corollary}[theorem]{Corollary}
\theoremstyle{definition}
\newtheorem{definition}[theorem]{Definition}
\newtheorem{remark}[theorem]{Remark}
\newtheorem{example}[theorem]{Example}
\def\m{\mathfrak m}
\def\p{\mathfrak p}
\def\Z{\mathbb Z}
\def\AAA{\mathcal A}
\def\SSS{\mathcal S}
\def\DD{\mathcal D}
\def\II{\mathcal I}
\def\TT{\mathcal T}
\def\OO{\mathcal O}
\def\kk{\mathsf k}
\def\MMod{\mathrm{{-}Mod}}
\def\mmod{\mathrm{{-}mod}}
\def\xra{\xrightarrow}
\def\bul{\bullet}
\renewcommand{\le}{\leqslant}
\renewcommand{\ge}{\geqslant}
\DeclareMathOperator{\Hom}{\textup{Hom}}
\DeclareMathOperator{\Ext}{\textup{Ext}}
\DeclareMathOperator{\End}{\mathrm{End}}
\DeclareMathOperator{\Spec}{\mathrm{Spec}}
\DeclareMathOperator{\coker}{\mathrm{coker}} 
\DeclareMathOperator{\hocolim}{\mathrm{hocolim}}
\DeclareMathOperator{\Supp}{\mathrm{Supp}}
\DeclareMathOperator{\coh}{\mathrm{coh}}
\DeclareMathOperator{\qcoh}{\mathrm{qcoh}}
\DeclareMathOperator{\Perf}{\mathrm{Perf}}
\DeclareMathOperator{\Rdim}{\mathrm{Rdim}}
\DeclareMathOperator{\gldim}{\mathrm{gldim}}
\DeclareMathOperator{\ann}{\mathrm{ann}}
\begin{document}

\title{Regular subcategories in bounded derived categories of affine schemes}
\author{Alexey Elagin}
\address{Institute for Information Transmission Problems (Kharkevich Institute), Moscow, RUSSIA\\
National Research University Higher School of Economics, Russian Federation}
\email{alexelagin@rambler.ru}
\author{Valery Lunts}
\address{Indiana University, Bloomington, USA\\
National Research University Higher School of Economics, Russian Federation}
\email{vlunts@indiana.edu}



\begin{abstract}
Let $R$ be a commutative Noetherian ring such that $X=\Spec R$ is connected. We prove that the category $D^b(\coh X)$ contains no proper full triangulated subcategories which are strongly generated. We also bound from below the Rouquier dimension of a triangulated category $\TT$, if there exists a triangulated functor $\TT \to D^b(\coh X)$ with certain properties. Applications are given to cohomological annihilator of $R$ and to point-like objects in $\TT$.

\end{abstract}

\maketitle



\footnotetext[0]{The study of regular subcategories on affine schemes (Chapters 2--3) was carried out by A.\,Elagin at the IITP RAS at the expense of the Russian
Foundation for Sciences (project  14-50-00150). The study of f-approximations and applications to point-like objects (Chapters 4--5) was performed by V.\,Lunts and partially supported by Laboratory of Mirror Symmetry NRU HSE, RF Government grant, ag. № 14.641.31.0001.}

\section{Introduction} 
In \cite{BVdB} Bondal and Van den Bergh proved the following result. Let $\TT$ be a $\kk$-linear 
$\Ext$-finite Karoubian triangulated category over a field $\kk$. Suppose $\TT$   has a strong generator (we recall the definition in Section 2.6). Then any cohomological functor $\TT\to\kk\mmod$ of finite type is representable. This theorem has nice consequences: in particular, any fully faithful functor from $\TT$ to a  $\kk$-linear 
$\Ext$-finite triangulated category is an embedding of a semi-orthogonal component.
Later Rouquier \cite{Rou} generalized the above result for categories linear over a Noetherian commutative ring.

Following D.\,Orlov, we call triangulated categories with a strong generator \emph{regular}.

Algebraic geometry provides many examples of regular triangulated categories: for example, such are the bounded derived categories of separated schemes of finite type over a perfect field (see \cite{Rou} or \cite{BVdB} for the case of 
smooth projective varieties over a field). See Example~\ref{example_example} for other examples of regular categories.

For a smooth projective variety $X$ the category $D^b(\coh X)$ can have regular triangulated subcategories: indeed, any semi-orthogonal component of a regular triangulated category is regular. In particular, a subcategory generated by an exceptional collection is regular.

It turns out that the situation with affine varieties is completely different: there are no non-trivial regular subcategories in $D^b(\coh X)$ for affine $X$. To be more precise, we prove the following

\begin{theorem} 
\label{theorem_1}{\rm (=Theorem  \ref{theorem_regular})}
Let $R$ be a commutative Noetherian ring.
Assume that $X:=\Spec R$ is connected. Let $\TT\subset D^b(\coh X)$ be a thick triangulated subcategory which is regular. Then
$\TT=0$ or $\TT=D^b(\coh X)$.
\end{theorem}

Note that we do not assume that $R$ is an algebra over a field or finitely generated over some other ring.

Theorem \ref{theorem_1} is a generalization (in the commutative case) of the following result by S.\,Oppermann and J.\,Stovicek \cite[Theorem 2]{OS}:
\begin{theoremintro}
Let $\Lambda$ be  an algebra, finite as a module over some commutative Noetherian ring. Let  $\TT\subset D^b(\mathrm{mod}-\Lambda)$ be a thick subcategory   such that $\Perf(\Lambda)\subset \TT$. If $\TT$ is regular then $\TT=D^b(\mathrm{mod}-\Lambda)$. 
\end{theoremintro} 

The difference with our result is that we do not need the $\Perf(\Lambda)\subset \TT$ assumption. In particular, our Theorem \ref{theorem_1} is applicable for smooth affine schemes (where $\Perf(\Lambda)=D^b(\mathrm{mod}-\Lambda)$).

Our proof of Theorem \ref{theorem_1} is quite different from the proof in \cite{OS}, it uses a variant of the cited above representability result (mainly following \cite{BVdB} and \cite{Rou}). As an immediate consequence we obtain the following 

\begin{theorem} {\rm (=Theorem \ref{theorem_appl})}
Let $X\neq \emptyset$ and $Y$ be two Noetherian affine schemes. Assume that $Y$ is connected and that the category $D^b(\coh X)$ is regular.
Then any fully faithful functor $D^b(\coh X)\to D^b(\coh Y)$ is an equivalence.
\end{theorem}

In the second part of the paper we consider a more general situation: a regular triangulated category $\TT$ and a functor (not necessarily full or faithful) $\TT\to D^b(R\mmod)$. We prove that if the image of this functor contains a collection of objects with certain properties, then the Rouquier dimension of $\TT$ cannot be smaller than a certain number. Here is an example of such situation:

\begin{theorem}
\label{theorem_2}
{\rm (=Corollary \ref{cor_bounddim})}
Let $R$ be a commutative Noetherian integral domain. Let $\TT$ be a regular triangulated category with Rouquier dimension $n$, and $F\colon \TT \to D^b(R\mmod)$ be a triangulated functor such that the essential image of $F$ contains a family $\{M_i\}_{i\in I}$ of finitely generated $R$-modules. Suppose there exist an integer $d\ge 1$ and a family of $R$-modules $\{N_i\}_{i\in I}$ such that $\cap_i \ann\Ext^d(M_i,N_i)=0$.  Then $d\le n$.
\end{theorem}

The proof of Theorem~\ref{theorem_2} and of its partner Theorem \ref{theorem_main} is based on the notion of an $f$-approximation, see Definition~\ref{definition_fapprox}. Roughly it describes the situation when one object can be obtained by some number of steps from another object ``up to an object'' which is annihilated by an element $f\in R$.

An application of Theorem~\ref{theorem_2} is given in Section \ref{section_pointlike} where we study point-like objects in regular categories. Recall that an object $E$ of an enhanced  triangulated $\kk$-linear category $\TT$ is called \emph{point-like} of dimension $r$ if 
the dg-algebra $\End(E)$ is quasi-isomorphic to the exterior algebra $\Lambda(\kk^r)$  (where $\deg(\kk^r)=1$ and the differential is zero). For instance, such is the structure sheaf $\OO_x$ of a regular point $x$ with the residue field $\kk$ on a $\kk$-scheme $X$, where $r$ is the dimension of $X$ at $x$.
We prove

\begin{theorem} {\rm (=Theorem \ref{theorem_point})}
Let $\TT$ be an enhanced $\kk$-linear $\Ext$-finite triangulated \nobreak{category}. Suppose $\TT$ has finite Rouquier dimension $n$ and contains a point-like object $E$ of dimension~$r$. 
Then $r\le n$.
\end{theorem}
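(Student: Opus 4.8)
The plan is to transport the problem, using the enhancement, into the derived category of modules over the graded endomorphism algebra of $E$, and then to run a ghost‑lemma argument whose combinatorics is controlled by the exterior algebra. Form the dg‑algebra $A=\mathbf R\End_\TT(E)$ of endomorphisms of $E$ (this uses the enhancement). By the point‑like hypothesis, $H^\bullet(A)$ is the exterior algebra $\Lambda:=\Lambda^\bullet_\k(\k^d)$ on $d$ generators in cohomological degree $1$, and $A$ is formal (part of what it means for $E$ to be point‑like), so $A\simeq\Lambda$. The only features of $\Lambda$ I will use are that it is a finite‑dimensional local self‑injective $\k$‑algebra whose radical filtration $\Lambda\supset\m\supset\cdots\supset\m^{d}\supset\m^{d+1}=0$ has length exactly $d+1$, the socle $\m^{d}=\Lambda^{d}$ being a non‑zero copy of $\k$; equivalently $\Lambda$ is Koszul with Koszul dual a regular ring of dimension $d$, and the estimate below is a derived‑categorical form of the equality $\dim D^b(R)=\dim R$ for a regular local ring $R$.

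Now fix a strong generator $G$ realizing the dimension, i.e.\ $\TT=\langle G\rangle_{n+1}$: every object of $\TT$ is a direct summand of one built from $G$ by shifts, finite direct sums and at most $n$ cones. The triangulated functor $\mathbf R\Hom_\TT(E,-)\colon\TT\to D(\mathrm{Mod}\text{-}\Lambda)$ sends $E$ to the free module $\Lambda$ and $G$ to a dg‑module $Q$ with $\bigoplus_iH^i(Q)=\bigoplus_i\Hom_\TT(E,G[i])$ finite‑dimensional over $\k$ (here $\Ext$‑finiteness enters). Since a triangulated functor never increases the number of cones needed to build an object, $E\in\langle G\rangle_{n+1}$ yields $\Lambda\in\langle Q\rangle_{n+1}$ in $D(\mathrm{Mod}\text{-}\Lambda)$. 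Assume for the moment that $Q$ may be taken to be a (shifted) direct sum of copies of the simple module $\k$. Then I would finish with the ghost lemma: the tower of radical quotients $\Lambda\twoheadrightarrow\Lambda/\m^{d}\twoheadrightarrow\cdots\twoheadrightarrow\Lambda/\m=\k$ is a composite of $d$ morphisms each of which is a $\k$‑ghost — it kills the socle of its source, and, by self‑injectivity of $\Lambda$, also all higher $\Ext$ into $\k$ — with non‑zero total composite $\Lambda\twoheadrightarrow\k$; hence $\Lambda\in\langle\k\rangle_{n+1}$ forces $d<n+1$, that is $d\le n$.

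The step to be careful about — and the main obstacle — is the reduction of $Q$ to a semisimple module. The generator $G$ is arbitrary, and if for instance $E$ is a direct summand of $G$ then $\Lambda$ is a summand of $Q$, so $\Lambda\in\langle Q\rangle_1$ and the estimate collapses. To fix this I would first localize (or complete) $\TT$ at the point‑like object $E$ — the exact analogue of replacing $\Spec R$ by the formal neighbourhood of a smooth closed point — which does not increase $\dim\TT$, turns $E$ into a residue‑field‑type object, and, because the local structure around a point‑like object is Koszul‑dual to a regular local ring of dimension $d$, makes $\mathbf R\Hom(E,G')$ concentrated in the socle by local duality, where $G'$ is the image of $G$. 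Establishing this Gorenstein/Koszul behaviour of point‑like objects, and verifying that the radical‑quotient maps are ghosts in the derived (not merely abelian) sense, are the two points that require genuine work. A variant of the same argument, avoiding the explicit ghost lemma, would package $\mathbf R\Hom_\TT(E,-)$ composed with a Koszul‑duality equivalence as a triangulated functor $\TT\to D^b(\coh X)$ with $X$ a regular affine (or local) $\k$‑scheme of dimension $d$ satisfying the hypotheses of the lower‑bound theorem announced in the abstract, and then simply conclude $\dim\TT\ge\dim D^b(\coh X)=\dim X=d$.
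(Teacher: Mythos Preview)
Your variant in the last sentence --- composing $\mathbf R\Hom_\TT(E,-)$ with Koszul duality to land in $D^b_{fg}(A)$ for $A=\k[[t_1,\dots,t_d]]$ and then invoking the lower-bound machinery of Section~\ref{appr} --- is exactly the paper's route. The direct ghost-lemma attempt is, as you yourself note, broken: $Q=\mathbf R\Hom(E,G)$ need not be semisimple, and ``localizing $\TT$ at $E$'' is not a well-defined operation on an abstract enhanced category, nor is there any reason such a construction should bound Rouquier dimension from above even if one made sense of it. The radical-quotient tower is also not obviously a $\k$-ghost sequence in $D(\Lambda)$ once one remembers that $\Hom^\bullet_{D(\Lambda)}(\k,-)$ is a derived functor over a non-semisimple algebra; you were right to flag this, and the paper simply avoids the issue by passing to the Koszul-dual side.

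Two points where your sketch of the variant needs sharpening. First, the composite $K\Phi\colon\TT\to D^b_{fg}(A)$ is \emph{not} essentially surjective, so the slogan ``$\dim\TT\ge\dim D^b_{fg}(A)=d$'' is not literally available. What one does have is that $\Phi$ identifies $\langle E\rangle$ with $\Perf(\Lambda)$, and the Koszul functor $K$ (built from an explicit free resolution of $\k$ over $\Lambda$; one computes $\End_{D(\Lambda)}(\k)\cong\k[[t_1,\dots,t_d]]$ concentrated in degree~$0$, this being Lemma~\ref{dg-mod}) sends $\Lambda$ to $\k[-d]$ and hence takes $\Perf(\Lambda)$ onto $D^b_{fg,0}(A)$, the full subcategory of objects supported at the closed point. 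Consequently every $A/I$ with $I$ an $\m$-primary ideal lies in $\langle K\Phi(G)\rangle_n$. Second, the final step is not a bare dimension comparison but the $f$-approximation argument: since $A$ is a domain of depth $d$, Proposition~\ref{lemma_exafatext} gives $\bigcap_I\ann\Ext^d_A(A/I,A)=0$, and Theorem~\ref{theorem_main} then forces $n\ge d$. That argument is completely indifferent to the structure of $K\Phi(G)$, which is precisely what bypasses the obstacle you identified in the direct approach.
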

 
Another application of our technique gives an upper bound, in terms of Rouquier dimension of $D^b(R\mmod)$, on the minimal number $d$ such that the $d$-th  cohomological annihilator of a Noetherian ring $R$ is non-trivial, 
 see  Theorem~\ref{theorem_ca}.
 
The paper is organized as follows. In Section~\ref{section_prelim} we recall the basic definitions and  background material. What is the most interesting here is Lemma~\ref{lemma_supp} which is surely well-known  but for which we do not know any reference. 
In Section~\ref{section_regsubcat} we formulate and prove Theorem~\ref{theorem_1}.
In Section~\ref{section_f} we introduce and develop the technique of $f$-approximations, and use it for proving Theorems~\ref{theorem_2} and \ref{theorem_main}. Applications to point-like objects occupy Section~\ref{section_pointlike}.
 
We thank Amnon Neeman and Michel Van den Bergh for useful conversations. We are grateful to Osamu Iyama who made us aware of the results by Oppermann and Stovicek.
Also we are grateful to the anonymous referee for the careful reading of the manuscript and for numerous valuable remarks.

\section{Notation, definitions and background material}
\label{section_prelim}

\subsection{Categories}
Throughout this paper $R$ will denote a commutative Noetherian ring and $X=\Spec R$ will denote the spectrum of $R$. By $R\MMod$ we will denote the abelian category of $R$-modules, while $R\mmod\subset R\MMod$ would be used for the abelian subcategory of finitely generated $R$-modules.
Let $D(R\MMod)$ be the unbounded derived category of $R$-modules and $D^b(R\mmod)$ be the bounded derived category of finitely generated $R$-modules.
Often it is convenient to work with the category $D^b_{fg}(R)$ of complexes in $D(R\MMod)$
which have finitely many cohomology modules and whose cohomology modules are finitely generated. This category $D^b_{fg}(R)$ is a strictly full subcategory in $D(R\MMod)$ and is equivalent to $D^b(R\mmod)$. We also have the following full subcategories $D^-_{fg}(R), D^+_{fg}(R), D_{fg}(R)\subset D(R\MMod)$ (with the obvious meaning). The category $\Perf(R)$ is the full subcategory of $D(R\MMod)$ consisting of complexes which are quasi-isomorphic to finite complexes consisting of projective modules of finite rank.
The category $D^-_{fg}(R)$ is equivalent to $D^-(R\mmod)$.

Sometimes we will use the geometric language. Recall that $R\MMod$ is equivalent to $\qcoh X$, the category of quasi-coherent sheaves on $X$, and $R\mmod$ is equivalent to $\coh X$, the category of coherent sheaves on $X$. Hence, $D(R\MMod)$ is equivalent to $D(\qcoh X)$, the unbounded derived category of quasi-coherent sheaves on $X$, and $D^b(R\mmod)$ is equivalent to $D^b(\coh X)$, the bounded derived category of coherent sheaves on $X$. Under these equivalences $D^b_{fg}(R)$ corresponds to $D^b_{\coh}(X)$, the strictly full  subcategory of $D(\qcoh X)$ formed by complexes whose cohomology sheaves are coherent and vanish except for a finite number.  We also have the geometric counterparts $D^-_{\coh}(X)$, $D^+_{\coh}(X)$, $D_{\coh}(X)$, $\Perf(X)$ of the categories $D^-_{fg}(R)$, $D^+_{fg}(R)$, $D_{fg}(R)$, $\Perf(R)$. The category $D^b_{\coh}(X)$ is equivalent to $D^b(\coh X)$ and is a good substitute for the latter, similarly $D^-_{\coh}(X)\simeq D^-(\coh X)$.

For objects $M,N$ of a triangulated category $\TT$ we use the following standard notation:
$$\Hom^i(M,N):=\Hom(M,N[i]).$$ 
By $\Hom^{\bul}(M,N)$ we denote the graded abelian group $\oplus_i\Hom^i(M,N)[-i]$, unless it is stated explicitly otherwise.

For $a\in \Z$ we denote by $D^{\ge a}_{\coh}(X)$ (resp. $D^{\le a}_{\coh}(X)$)  the full subcategory in $D_{\coh}(X)$ consisting of objects $C$ such that $H^i(C)=0$ for any $i<a$ (resp. $i>a$). Similar notation is used for other variants of derived categories.

\subsection{Dualizing complex}
\label{section_dualizing}
Recall that a \emph{dualizing complex} on $X$ is an object $\DD \in D^b(\coh X)$ such that the functor $R\mathcal Hom(-,\DD)$ induces an involutive anti-autoequivalence on the category $D^b(\coh X)$. It is known that if $\DD$ exists it is unique up to tensoring with a line bundle and a shift, see \cite[V.3]{Ha}. It is also known that the same functor induces an involutive anti-autoequivalence of the category $D_{\coh} (X)$ and  defines an anti-equivalence between the categories $D^-_{\coh}(X)$ and $D^+_{\coh} (X)$. The dualizing complex is known to exist for example if $R$ is a complete local Noetherian ring, see \cite[V.10]{Ha}.

\subsection{Orthogonality}

Let $M$ be an $R$-module.  Recall that the \emph{support}
$\Supp M$ of $M$ is defined to be the subset of $X=\Spec R$ consisting of all points $x\in X$ (i.e. prime ideals $\p\subset R$)
such that the localization $M_x:=M_{\p}=M\otimes_R R_{\p}$ is not equal to $0$. If the module~$M$ is finitely generated then $\Supp M$ is closed in $X$ in Zariski topology. Also, for a finitely generated module $M$ one has $\Supp M=\emptyset$ if and only if $M=0$. For a complex $M\in D(R\MMod)$ the support $\Supp M$ is defined as
$$\cup_i \Supp H^i(M).$$
Equivalently, $\Supp M$ consists of point $x\in X$  such that the localization $M_{x}$ is a non-zero object of $D(R_{\p}\MMod)$. If $M\in D^b(R\mmod)$ then $\Supp M$ is closed in $X$ and $\Supp M=\emptyset$ if and only if $M=0$.

\begin{lemma}
\label{lemma_otimes}
Let $M,N\in D^-_{fg}(R)$.
\begin{enumerate}
\item Suppose $R$ is local. If $M,N\ne 0$ then $M\otimes_R^L N\ne 0$.
\item In general, suppose  $\Supp M\cap \Supp N\ne \emptyset$. Then $M\otimes^L_R N\ne 0$.
\end{enumerate}
\end{lemma}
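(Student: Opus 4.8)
The plan is to reduce the general statement (2) to the local statement (1) by localizing at a point in the intersection of the supports. First I would prove (1). Since $R$ is local with maximal ideal $\m$ and residue field $\k = R/\m$, I would use the theory of minimal free resolutions: for $M \in D^-_{fg}(R)$ nonzero, choose a ``minimal'' representative, i.e. a bounded-above complex $P^\bul$ of finite free $R$-modules with all differentials having entries in $\m$ (such a minimal model exists because $R$ is Noetherian local and $M$ has finitely generated cohomology bounded above). The key point is that for such a minimal complex, $P^\bul \otimes_R \k$ has zero differential, so $H^i(M \otimes_R^L \k) = P^i \otimes_R \k$; in particular, letting $i_0$ be the largest index with $P^{i_0} \ne 0$ (which exists and is finite since $M \ne 0$ is in $D^-_{fg}$ and minimality forbids contractible summands), we get $H^{i_0}(M \otimes^L_R \k) \ne 0$. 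Doing the same for $N$ with top index $j_0$, I would then compute $M \otimes^L_R N \otimes^L_R \k \cong (P^\bul \otimes_R Q^\bul) \otimes_R \k \cong (P^\bul \otimes_R \k) \otimes_\k (Q^\bul \otimes_R \k)$, whose cohomology in top degree $i_0 + j_0$ is $(P^{i_0}\otimes_R \k) \otimes_\k (Q^{j_0} \otimes_R \k) \ne 0$ by the Künneth formula over the field $\k$. Hence $M \otimes^L_R N \otimes^L_R \k \ne 0$, so a fortiori $M \otimes^L_R N \ne 0$.

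For part (2), take a prime $\p \in \Supp M \cap \Supp N$ and localize at $\p$. Localization is exact, commutes with $\otimes^L$, and preserves the property of having finitely generated cohomology bounded above, so $M_\p, N_\p \in D^-_{fg}(R_\p)$. By definition of support, $M_\p \ne 0$ and $N_\p \ne 0$ in $D(R_\p\text{-}\Mod)$. Applying part (1) to the local ring $R_\p$ gives $(M \otimes^L_R N)_\p \cong M_\p \otimes^L_{R_\p} N_\p \ne 0$, hence $M \otimes^L_R N \ne 0$.

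The main obstacle — really the only substantive point — is the existence and behaviour of minimal free resolutions in the bounded-above setting, and specifically the claim that minimality forces the top nonzero term of $P^\bul$ to survive after $-\otimes_R \k$ and to contribute a nonzero Tor in the extreme degree. One must be slightly careful that $M$, being only bounded-above (not bounded) with finitely generated cohomology, still admits a minimal free model with each $P^i$ finitely generated; this is standard over a Noetherian local ring, built degree by degree descending from the top cohomological degree. Once this is in hand, the Künneth computation over the residue field is purely formal, and the localization argument in (2) is routine.
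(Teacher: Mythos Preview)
Your proof is correct, and part (2) matches the paper's argument exactly: localize at a prime in the common support and reduce to (1).

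For part (1), however, the paper takes a shorter route that avoids minimal free resolutions entirely. After shifting so that both $M$ and $N$ have top cohomology in degree $0$, one simply observes that $H^0(M\otimes^L_R N)=H^0(M)\otimes_R H^0(N)$ (a bounded-above free resolution of each lives in degrees $\le 0$, and the top cohomology of the tensor product of such complexes is the ordinary tensor product of the top cohomologies, by right exactness). Then one invokes the elementary fact that over a local ring the tensor product of two nonzero finitely generated modules is nonzero (Nakayama, or \cite[Ex.~2.3]{AM}). Your argument via minimal models essentially unwinds this last step---tensoring with the residue field---but packages it with more machinery (existence of minimal complexes, K\"unneth over $\k$). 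Both approaches ultimately rest on Nakayama; the paper's is more direct, while yours has the minor advantage of making the nonvanishing degree of $M\otimes^L_R N\otimes^L_R \k$ completely explicit without appealing to a separate fact about modules.
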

\begin{proof}
1. Without loss of generality we can assume that
$$0=\max\{i\colon H^i(M)\ne 0\}=\max\{i\colon H^i(N)\ne 0\}.$$
Then $H^0(M\otimes^L_RN)=H^0(M)\otimes_R H^0(N)$. The statement follows from the fact that
the tensor product of two nonzero finitely generated $R$-modules is nonzero, see \cite[Ex.~2.3]{AM}.

2. Take $x\in \Supp M\cap \Supp N$. By definition, $M_{x}$ and $N_{x}$ are nonzero.
Note that
$$(M\otimes^L_RN)_{x}\simeq M_{x}\otimes^L_{R_{x}}N_{x}.$$
By part 1, we have  $M_{x}\otimes^L_{R_{x}}N_{x}\ne 0$, hence $M\otimes^L_RN\ne 0$.
\end{proof}

\begin{lemma}
\label{lemma_supp}
Let $M\in D^-_{fg}(R\mmod), N\in D^+_{fg}(R\mmod)$ be complexes. Then $\Hom^{\bul}(M,N)=0$ if and only if $\Supp M\cap \Supp N= \emptyset$.
\end{lemma}
\begin{proof} The ``if'' implication is clear, we shall prove the ``only if'' statement.

First, we reduce to the case when $R$ is a complete local Noetherian ring. 
Let $x\in \Supp M\cap \Supp N$. Consider the corresponding local ring $R_x$ and its completion
$\hat{R}_x$. The functor $(-)\otimes _R\hat{R}_x$ is exact since $\hat{R}_x$ is a flat $R$-algebra. Recall that $M$ is quasi-isomorphic to an object of $D^-(R\mmod)$. For
$R$-modules $K,L$, where $K$ is finitely generated,  we have
$$\Hom _R(K,L)\otimes _R\hat{R}_x=\Hom _{\hat{R}_x}(\hat{K}_x,\hat{L}_x)$$
and hence 
$$\Hom ^\bullet _R(M,N)\otimes _R\hat{R}_x=\Hom^\bullet _{\hat{R}_x}(\hat{M}_x,\hat{N}_x).$$
(Here we denote by $\hat{K}_x$ the $\hat{R}_x$-module $K\otimes _R\hat{R}_x$ and similarly for $L$, $M$ and $N$). Moreover,
if $x\in \Supp M \cap \Supp N$, then also $x\in \Supp \hat{M}_x \cap \Supp \hat{N}_x$. Thus we may
assume that $R$ is a complete local Noetherian ring. Then $R$ admits a dualizing complex, see Section~\ref{section_dualizing}. 

Let $\DD\in D^b_{fg}(R)$ be a dualizing complex.
Denote $N^{\vee}=R\mathcal Hom(N,\DD)$. Then $N\simeq R\mathcal Hom(N^{\vee},\DD)$ and
$$
\Hom^i(M,N)\simeq \Hom^i(M,R\mathcal Hom(N^{\vee},\DD))\simeq 
\Hom^i(M\otimes^L_R N^{\vee},\DD).
$$
Since duality is an anti-equivalence, it suffices to check that $M\otimes^L N^{\vee}\ne 0$.
Note that $N^{\vee}\in D^-_{fg}(R)$ and use Lemma~\ref{lemma_otimes} to complete the proof.
\end{proof}

\subsection{Semi-orthogonal decompositions} 

For a triangulated subcategory (or just for a set of objects) $\SSS\subset \TT$ in a triangulated category $\TT$ its \emph{right orthogonal} $\SSS^{\perp}$ (resp. \emph{left orthogonal} $^{\perp}\SSS$ ) is defined as
the full subcategory in $\TT$ consisting of all objects $F$ such that $\Hom^i(S,F)=0$ (resp. $\Hom^i(F,S)=0$) for all $i\in\Z$ and $S\in\SSS$. A \emph{semi-orthogonal decomposition} $\TT =\langle \TT _1,\TT_2\rangle$ of a triangulated category~$\TT$ is a pair $\TT _1 ,\TT _2\subset \TT$ of full triangulated subcategories such that $\TT _1\subset \TT _2^\perp$, and each object $A\in \TT$ appears in a distinguished triangle $A_2 \to A \to A_1\to A_2[1]$, where $A_i\in \TT _i$.

Lemma \ref{lemma_supp} has the following neat corollaries.

\begin{corollary} 
\label{cor_support}
Let $R$ be a commutative Noetherian ring and $X=\Spec R$. 
Let $M\in D^-_{fg}(R)$  be a complex such that $\Supp M=X$. Then 
$$M^{\perp}\cap D^+_{fg}(R)=0.$$
Similarly, let  $N\in D^+_{fg}(R)$ be a complex such that $\Supp N=X$.
Then 
$$^{\perp}N\cap D^-_{fg}(R)=0.$$
\end{corollary}

\begin{corollary} 
\label{cor_semi}
Let $R$ be a commutative Noetherian ring and $X=\Spec R$. Suppose that $D^b(\coh X)=\langle \TT _1 ,\TT _2\rangle $ is a semi-orthogonal decomposition. Then $D^b (\coh X)=\TT _1 \oplus \TT _2$, i.e. the semi-orthogonal decomposition is an orthogonal one. Moreover, if $X$ is connected then $\TT _1 =0$ or $\TT _2 =0$.

Similar statements hold for $\Perf(R)$ in place of $D^b(\coh X)$.
\end{corollary}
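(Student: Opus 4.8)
The plan is to deduce Corollary~\ref{cor_semi} from Lemma~\ref{lemma_supp} by a support-theoretic argument. First I would unwind the definition of the semi-orthogonal decomposition $D^b(\coh X)=\langle\TT_1,\TT_2\rangle$: I need to show that in fact $\TT_1\subset\TT_2^\perp$ \emph{and} $\TT_2\subset\TT_1^\perp$, which by the standard characterisation means the gluing triangle $A_2\to A\to A_1\to A_2[1]$ splits functorially and yields $D^b(\coh X)=\TT_1\oplus\TT_2$. The key point is orthogonality on \emph{both} sides. So pick $T_1\in\TT_1$ and $T_2\in\TT_2$; from the semi-orthogonality we already know $\Hom^i(T_2,T_1)=0$ for all $i$, and I want $\Hom^i(T_1,T_2)=0$ for all $i$ as well. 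Both $T_1,T_2$ lie in $D^b(\coh X)\subset D^-_{\coh}(X)\cap D^+_{\coh}(X)$, so Lemma~\ref{lemma_supp} applies: if $\Supp T_1\cap\Supp T_2\ne\emptyset$ then $\Hom^i(T_2,T_1)\ne0$ for some $i$, contradicting semi-orthogonality. Hence $\Supp T_1\cap\Supp T_2=\emptyset$ for all such $T_1,T_2$. Applying Lemma~\ref{lemma_supp} once more with the roles reversed (disjoint supports) gives $\Hom^i(T_1,T_2)=0$ for all $i$, establishing the two-sided orthogonality and hence $D^b(\coh X)=\TT_1\oplus\TT_2$.

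For the connectedness statement, I would argue with supports globally. Let $Z_i=\bigcup_{T\in\TT_i}\Supp T\subseteq X$. From the decomposition $D^b(\coh X)=\TT_1\oplus\TT_2$, every structure sheaf of a point (equivalently, every residue field $k(x)$, $x\in X$) decomposes as a direct sum of an object of $\TT_1$ and an object of $\TT_2$; since such an object is indecomposable in $D^b(\coh X)$, it lies entirely in $\TT_1$ or entirely in $\TT_2$. This partitions the points of $X$, so $X=Z_1\sqcup Z_2$ as sets with $Z_1\cap Z_2=\emptyset$. Next I would check each $Z_i$ is closed: if $T\in\TT_i$ then $\Supp T$ is closed (as $T\in D^b(\coh X)$), and moreover every coherent subsheaf/quotient supported on a closed subset of $\Supp T$ again lies in $\TT_i$ because $\TT_i$ is a direct summand (hence thick), so $Z_i$ is a union of closed sets that one can organise into a closed set — more cleanly, $Z_i$ is the complement of $Z_{3-i}$, and $Z_{3-i}\supseteq\Supp\O_{Z_{3-i}}$-type considerations show each is open as well. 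Thus $X=Z_1\sqcup Z_2$ is a decomposition into disjoint open-and-closed subsets; connectedness of $X$ forces $Z_1=\emptyset$ or $Z_2=\emptyset$, i.e. $\TT_1=0$ or $\TT_2=0$ (using that a nonzero object of $D^b(\coh X)$ has nonempty support).

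Finally, for the $\Perf(R)$ statement I would repeat the same reasoning, noting that $\Perf(R)\subset D^b_{fg}(R)\subset D^-_{\coh}(X)\cap D^+_{\coh}(X)$, so Lemma~\ref{lemma_supp} is still available; the only extra care needed is that $\Perf(R)$ may not contain residue fields of points, so for the connectedness part I would instead use that $\Perf(R)$ contains $\O_X$ itself (the free module $R$), whose support is all of $X$: writing $\O_X=P_1\oplus P_2$ with $P_i\in\TT_i$ and restricting to a connected component forces one summand to vanish there, and a Koszul/localization argument shows $\TT_i$ vanishes on the corresponding open-closed piece. The main obstacle I anticipate is this last point — verifying that the support sets $Z_i$ are genuinely open (not merely that their union is $X$ and they are disjoint), i.e. that the decomposition is a decomposition of $X$ into clopen pieces; this is where thickness of the summands and the behaviour of support under the triangle $A_2\to A\to A_1$ (namely $\Supp A=\Supp A_1\cup\Supp A_2$ with the two pieces disjoint) must be used carefully, together with the fact that a direct-sum decomposition of $\O_X$ corresponds precisely to an idempotent in $R$, hence to a clopen partition of $\Spec R$.
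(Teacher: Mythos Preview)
Your first paragraph is essentially the paper's argument, but with one slip: once you have established $\Supp T_1\cap\Supp T_2=\emptyset$, you cannot cite Lemma~\ref{lemma_supp} ``with the roles reversed'' to conclude $\Hom^i(T_1,T_2)=0$. Lemma~\ref{lemma_supp} only gives the implication \emph{nonempty intersection $\Rightarrow$ nonzero $\Hom$}; you want the converse of that, which is a separate (and much easier) fact: localising $R\Hom(T_1,T_2)$ at any prime $\mathfrak p$ gives $R\Hom((T_1)_{\mathfrak p},(T_2)_{\mathfrak p})$, and disjoint supports force one factor to vanish. The paper simply asserts orthogonality once the supports are disjoint, taking this direction for granted.

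Your connectedness argument, however, is both more complicated than needed and has a genuine gap. The residue field $k(x)$ lies in $D^b(\coh X)$ only when $x$ is a \emph{closed} point; for a non-closed point $k(x)$ is not a finitely generated $R$-module, so your partition of all points of $X$ via indecomposability of $k(x)$ does not go through as stated. You could repair this by using $R/\mathfrak p$ instead (which is coherent and still indecomposable since it is a domain), but you would then still owe an argument that the sets $Z_i$ are closed and not merely specialization-closed --- and you acknowledge this yourself as the ``main obstacle''. The paper bypasses all of this: it simply looks at the decomposition triangle $A_2\to\O_X\to A_1$, so that $X=\Supp\O_X=\Supp A_1\cup\Supp A_2$ is a covering by two disjoint \emph{closed} sets, and checks directly that neither $A_i$ can be zero when both $\TT_i$ are nonzero (if $A_1=0$ then $\O_X\in\TT_2$, whence $\TT_1\subset\O_X^\perp=0$). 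This is exactly the idempotent-in-$R$ argument you mention at the very end for $\Perf(R)$; the point is that it works uniformly for $D^b(\coh X)$ and $\Perf(R)$ alike, since $\O_X$ lies in both, and makes your detour through residue fields unnecessary.
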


\begin{proof} Lemma \ref{lemma_supp} implies that for any $A_1\in \TT _1$ and $A_2\in \TT _2$ we have $\Supp (A_1)\cap \Supp (A_2)=\emptyset$ (otherwise $A_1 \notin \TT _2^\perp$). Again by Lemma  \ref{lemma_supp} we have $\Hom^{\bul}(A_1,A_2)=0$ and thus $D^b(\coh X)=\TT _1\oplus \TT _2$. To prove the second assertion, assume the contrary: $\TT_1,\TT_2\ne 0$. Note that there exists a distinguished triangle $A_2\to \mathcal O _X \to A_1\to A_2[1]$ in $D^b(\coh X)$ where $A_i\in \TT _i$. Thus $X= \Supp \OO_ X =\Supp A_1 \cup \Supp A_2$. Suppose that $A_1=0$. Then $\mathcal O _X=A_2 \in \TT _2$. Hence $\TT_1\subset \OO_X^{\perp}=0$ by Corollary~\ref{cor_support}, a contradiction. Similarly we check that $A_2\ne 0$. Therefore $A_1,A_2\neq 0$, and then $X$ is the union of the nonempty closed subsets $\Supp (A_1)$ and $\Supp (A_2)$. By orthogonality of $A_1$ and $A_2$ and Lemma  \ref{lemma_supp}, one has $\Supp (A_1)\cap\Supp (A_2)=\emptyset$, hence $X$ is disconnected. A contradiction.
\end{proof}

\subsection{Karoubian categories}

A full triangulated subcategory $\TT'\subset \TT$ of a triangulated category is called \emph{thick} if any object in $\TT$ which is a direct summand of an object in $\TT'$ belongs to $\TT'$. Note that this property depends not only on $\TT'$ but also on $\TT$.

For a full triangulated subcategory $\TT'$ of a triangulated category $\TT$ one can consider its \emph{idempotent} or \emph{thick closure} in $\TT$: it is the full subcategory in $\TT$ formed by objects of $\TT$ which are isomorphic to direct summands of objects of $\TT'$. The thick closure of a triangulated subcategory is also triangulated.

A triangulated category $\TT$ is called {\it Karoubian} if every idempotent splits in $\TT$.
The following proposition is well known.

\begin{prop} 
\label{prop_karoubian} 
Let $R$ be a Noetherian ring and $X=\Spec R$.
\begin{enumerate}
\item The category $D(\qcoh X)$ is Karoubian.

\item The category $D^b(\coh X)$ is Karoubian.

\item A thick subcategory of $D^b(\coh X)$ is Karoubian.

\item The category $\Perf (X)$ is Karoubian.
\end{enumerate}
\end{prop}

\begin{proof} 1. It is a classical fact (see \cite[Prop.3.2]{BN}) that a triangulated category with countable direct sums is Karoubian.

2. Since $D^b(\coh X)\simeq D^b_{\coh}(X)\subset D(\qcoh X)$, by part 1 a direct summand of an object in $D^b_{\coh}(X)$ exists in $D(\qcoh X)$.
But then it clearly is in $D^b_{\coh}(X)$.

3. This follows from part 2 because any thick subcategory of a Karoubian category is Karoubian.

4. This is proved in \cite[Prop. 3.4]{BN}.
\end{proof}

\subsection{Regular categories}
\label{section_regularcategories}
Let $G$ be an object of a triangulated category $\TT$. By $[G]_n$ we denote the full subcategory in $\TT$ consisting of objects, generated by $G$ in $n$ steps. To be more precise, let $[G]_0$ be the full subcategory whose objects are finite direct sums of shifts of $G$. For $k\ge 1$ the full subcategory $[G]_k$  is defined as the subcategory of objects $F$ such that there exists a distinguished triangle $F_{k-1}\to F\to F_{0}\to F_{k-1}[1]$ with $F_0\in [G]_0$ and $F_{k-1}\in [G]_{k-1}$.
By $\langle G \rangle_n$ we denote the idempotent closure of $[G]_n$ in $\TT$.

Note here that our usage of notation $\langle G\rangle_n$ is different from the one in \cite{BVdB} or \cite{Rou}. We count the number of cones used to construct an object, not the number of building blocks.

The next lemma is standard.
\begin{lemma} \label{lemma_wellknown1}
\begin{enumerate}
\item If $A\in [G]_k$, $B\in [G]_l$ and $g\colon A\to B$ is a morphism, then $Cone(g)$ lies in $[G]_{k+l+1}$. The subcategory $\cup_n [G]_n$ is triangulated in $\TT$.
\item If $A\in \langle G \rangle_k$, $B\in \langle G \rangle_l$ and $g\colon A\to B$ is a morphism, then  $Cone(g)$  lies in $\langle G \rangle_{k+l+1}$. The subcategory $\cup_n \langle G\rangle_n$ is triangulated and thick in $\TT$.
\end{enumerate}
\end{lemma}

\begin{proof} Part 1 follows from the ``associativity lemma'' \cite[Lemma 1.3.10]{BBD} and part 2 is a consequence of \cite[Lemma 2.2.1]{BVdB}.
\end{proof}

The category $[G]=\cup_n [G]_n$ is the smallest full triangulated subcategory in $\TT$ containing~$G$.
The category $\langle G \rangle=\cup_n \langle G \rangle_n$ is said to be \emph{classically generated by $G$}. It means that it is the smallest full triangulated thick subcategory in $\TT$ containing~$G$. 
Also note that in general the categories $[G]$ and $\langle G\rangle$ are not closed under (infinite) direct sums in~$\TT$.

Further, one defines analogous subcategories involving taking all possible direct sums. Note that we do not suppose that $\TT$ has all direct sums or direct sums of some fixed cardinality. We consider only direct sums that exist in $\TT$.

Let $[G]^{\oplus,\TT}_0$ be the full subcategory of $\TT$ whose objects are all direct sums of shifts of~$G$ that exist in $\TT$. For $k\ge 1$ the full subcategory $[G]^{\oplus,\TT}_k$  is defined as the subcategory consisting of objects $F$ such that there exists a distinguished triangle $F_{k-1}\to F\to F_{0}\to F_{k-1}[1]$ with $F_0\in [G]^{\oplus,\TT}_0$ and $F_{k-1}\in [G]^{\oplus,\TT}_{k-1}$.
By $\langle G \rangle^{\oplus,\TT}_n$ we denote the idempotent closure of $[G]^{\oplus,\TT}_n$ in $\TT$. Also we put
$[G]^{\oplus,\TT}=\cup_n [G]^{\oplus,\TT}_n$  and $\langle G \rangle^{\oplus,\TT}=\cup_n\langle G \rangle^{\oplus,\TT}_n$.

The next lemma is also well known.
\begin{lemma}
\label{lemma_generation}

\begin{enumerate}
\item If $A\in [G]^{\oplus,\TT}_k$, $B\in [G]^{\oplus,\TT}_l$ and $g\colon A\to B$ is a morphism, then  $Cone(g)$  lies in $[G]^{\oplus,\TT}_{k+l+1}$. The subcategory $\cup_n [G]_n^{\oplus,\TT}$ is triangulated in~$\TT$.
\item If $A\in \langle G \rangle^{\oplus,\TT}_k$, $B\in \langle G \rangle^{\oplus,\TT}_l$ and $g\colon A\to B$ is a morphism, then  $Cone(g)$ lies in $\langle G \rangle^{\oplus,\TT}_{k+l+1}$. The subcategory $\cup_n \langle G\rangle_n^{\oplus,\TT}$ is triangulated and thick in $\TT$.
\item The subcategories $[G]^{\oplus,\TT}_n$ and $\langle G\rangle^{\oplus,\TT}_n$ in $\TT$ are closed under forming all direct sums in $\TT$.
\end{enumerate}
\end{lemma}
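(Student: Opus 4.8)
We outline the argument; it follows the pattern of the proof of Lemma~\ref{well-known-1}, the only extra input being that a direct sum of distinguished triangles all of whose terms exist in $\TT$ is again distinguished, together with the trivial observation that $[G]^{\oplus,\TT}_0$ is stable under shifts.

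For (1) we induct on $l$ (for all $k$ at once), the case $l=0$ being settled by an inner induction on $k$. In the base case $k=l=0$, for $g\colon A\to B$ with $A,B\in[G]^{\oplus,\TT}_0$ the rotated triangle $B\to Cone(g)\to A[1]\to B[1]$ together with $A[1],B\in[G]^{\oplus,\TT}_0$ shows $Cone(g)\in[G]^{\oplus,\TT}_1$. For the inductive step, assuming $k\ge1$, fix a triangle $A'\to A\to A''\to A'[1]$ with $A''\in[G]^{\oplus,\TT}_0$ and $A'\in[G]^{\oplus,\TT}_{k-1}$, and apply the octahedral axiom, in the form of \cite[Lemma~1.3.10]{BBD}, to the composition $A'\to A\xra{g}B$: one obtains a triangle $Cone(A'\to B)\to Cone(g)\to A''[1]\to Cone(A'\to B)[1]$ in which $Cone(A'\to B)\in[G]^{\oplus,\TT}_{(k-1)+l+1}=[G]^{\oplus,\TT}_{k+l}$ by the inductive hypothesis and $A''[1]\in[G]^{\oplus,\TT}_0$, so that $Cone(g)\in[G]^{\oplus,\TT}_{k+l+1}$; the remaining case $k=0,\ l\ge1$ is handled similarly, using the defining triangle of $B$ together with the case $l=0$ already established. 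That $\bigcup_n[G]^{\oplus,\TT}_n$ is triangulated is then formal: it is stable under shifts and, by the above, under cones. Statement (2) follows in the same manner, invoking \cite[Lemma~2.2.1]{BVdB} to push idempotent closures through cones without spoiling the index bound; and $\bigcup_n\langle G\rangle^{\oplus,\TT}_n$ is thick because each $\langle G\rangle^{\oplus,\TT}_n$, being an idempotent closure, is stable under direct summands.

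For (3) we induct on $n$. The case $n=0$ is clear: a direct sum, existing in $\TT$, of objects each of which is a direct sum of shifts of $G$ is, after reindexing, again a direct sum of shifts of $G$; and a direct summand of such an object is still of this form, which handles $\langle G\rangle^{\oplus,\TT}_0$. For $n\ge1$, let $\{F_i\}$ be a family in $[G]^{\oplus,\TT}_n$ whose coproduct $F:=\bigoplus_iF_i$ exists in $\TT$, and choose triangles $A_i\to F_i\to B_i\to A_i[1]$ with $B_i\in[G]^{\oplus,\TT}_0$ and $A_i\in[G]^{\oplus,\TT}_{n-1}$. Once one knows that $\bigoplus_iB_i$ exists --- whence $\bigoplus_iA_i$ exists as well, being a shift of the cone of the induced map $F\to\bigoplus_iB_i$ --- the direct sum triangle $\bigoplus_iA_i\to F\to\bigoplus_iB_i\to(\bigoplus_iA_i)[1]$, combined with the inductive hypothesis $\bigoplus_iA_i\in[G]^{\oplus,\TT}_{n-1}$ and the case $n=0$ giving $\bigoplus_iB_i\in[G]^{\oplus,\TT}_0$, shows $F\in[G]^{\oplus,\TT}_n$; the idempotent-closed version is identical.

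The octahedral bookkeeping in (1) and (2) is entirely routine. The one point that demands genuine attention is the existence of the auxiliary coproduct $\bigoplus_iB_i$ in (3): since $\TT$ is not assumed cocomplete, this has to be deduced from the sole hypothesis that $\bigoplus_iF_i$ exists. I expect this, rather than any homological algebra, to be the real content of part (3); it is automatic in every situation where the proposition gets applied --- for instance for $\TT=D(\qcoh X)$, which admits all coproducts, or for $\TT=\Perf(X)$ and $\TT=D^b(\coh X)$, where only finite coproducts exist and the assertion is trivial.
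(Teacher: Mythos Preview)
Your treatment of (1) and (2) is correct and is exactly the paper's: the paper says only that these are ``proved exactly as (1) and (2) in Lemma~\ref{well-known-1}'', i.e.\ via \cite[Lemma~1.3.10]{BBD} and \cite[Lemma~2.2.1]{BVdB}, which is what you do.

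For (3) your approach---induct on $n$ and sum the defining triangles---is again the paper's; the paper cites only \cite[Remark~1.2.2]{Ne} (a direct sum of distinguished triangles is distinguished) and says nothing more. You are right to isolate the existence of $\bigoplus_i B_i$ as the real issue: the paper's one-line proof tacitly assumes it, and the result from \cite{Ne} is stated for categories with arbitrary coproducts. In the generality of the lemma, where $\TT$ is only assumed to contain \emph{some} coproducts, neither you nor the paper closes this gap, and I do not see an argument that does. Your diagnosis that it is harmless in practice is also on target: the only invocation of (3) in the paper is for $\bar\TT_n=\langle G\rangle^{\oplus,D(\qcoh X)}_n$ in the proof of Lemma~\ref{lemma_TT}, where $\TT=D(\qcoh X)$ has all coproducts.

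One small correction: in your base case $n=0$ for the idempotent-closed version, the clause ``a direct summand of such an object is still of this form'' is false as stated---a summand of a sum of shifts of $G$ need not itself be such a sum. What you actually need is that $\bigoplus_i F_i$ is a summand of some object of $[G]^{\oplus,\TT}_0$, and the natural candidate $\bigoplus_i F_i'$ (with $F_i$ a summand of $F_i'\in[G]^{\oplus,\TT}_0$) again runs into the same existence question you already flagged. So this is not a new error, just another instance of the same gap.
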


\begin{proof} Parts 1 and 2 are proved exactly as parts 1 and 2 in Lemma \ref{lemma_wellknown1} and part 3 follows from the fact that an arbitrary sum of distinguished triangles is a distinguished triangle, see \cite[Remark 1.2.2]{Ne}.
\end{proof}

\begin{definition}

An object $G$ of a triangulated category $\TT$ is said to be a \emph{classical generator} if $\TT=\langle G\rangle$. An object $G\in\TT$ is said to be a \emph{strong generator} if $\TT=\langle G\rangle_n$ for some~$n$. A triangulated category $\TT$ is called \emph{regular} if it has a strong generator.

The \emph{Rouquier dimension} $\Rdim{\TT}$ of a regular triangulated category $\TT$ is defined as the minimal integer $n$ such that $\TT =\langle G\rangle _n$ for some $G\in \TT$.
\end{definition}

Note that the regularity of quasi-projective algebraic varieties  is consistent with the regularity of their categories of perfect complexes, see Example~\ref{example_example}(5) below.

\begin{remark}[({See \cite[Chapter 3]{Rou}})]
\label{remark_allstrong}
Any generator of a regular triangulated category is a strong one. Indeed, suppose $\TT=\langle G\rangle_n$ and $G'$ is a generator of $\TT$. Then $G\in \langle G'\rangle_k$ for some $k$. It follows from Lemma~\ref{lemma_wellknown1} that $\TT=\langle G\rangle_n\subset \langle G'\rangle_{(n+1)(k+1)-1}$, therefore $G'$ is also a strong generator of $\TT$.
\end{remark}

\begin{example}
\label{example_example}
\begin{enumerate}
\item Let $R$ be a commutative ring that is either essentially of finite type over a field or an equicharacteristic excellent local ring. Then $D^b(R\mmod)$ is regular. \cite[Th.~1.4 or Cor. 7.2]{IT2}
\item Let $R$ be a Noetherian commutative ring. Then $\Perf(R)$ is regular if and only if $R$ has finite global dimension. 
\item Let $Y$ be a smooth scheme over a field. Then $D^b(\coh Y)$ is regular. \cite[Th. 3.1.4]{BVdB}
\item Let $Y$ be a separated scheme of finite type over a perfect field. Then $D^b(\coh Y)$ is regular. \cite[Th. 7.38]{Rou}
\item Let $Y$ be a quasi-projective scheme over a field. Then $\Perf(Y)$ is regular if and only if $Y$ is regular. \cite[Prop. 7.34]{Rou}
\end{enumerate}
\end{example}

There exists a commutative Noetherian regular ring $R$ for which the category $D^b(R\mmod)$ is equivalent to $\Perf(R)$ and is not regular, see Example~\ref{example_nagata} below.

\subsection{Compact objects}
\begin{definition}
Let $\TT$ be an additive category. An object $M\in \TT$ is called \emph{compact} if $\Hom(M,-)$ commutes with direct sums  that exist in $\TT$. That is,
the natural map
$$\oplus_{\alpha}\Hom(M,M_{\alpha}) \to \Hom(M,\oplus_{\alpha}M_{\alpha})$$
is an isomorphism for any $\oplus_{\alpha}M_{\alpha}$ that exists in $\TT$. We denote by $\TT ^c$ the full subcategory of compact objects in $\TT$. If $\TT$ is triangulated then $\TT^c$ is also triangulated.
\end{definition}

The following facts are well known.

\begin{lemma}
\label{lemma_neeman}
Let $\TT$ be a triangulated category. Let $G$ be a compact object in $\TT$. 
Then
$$\langle G\rangle^{\oplus,\TT}\cap \TT^c=\langle G\rangle.$$
\end{lemma}

\begin{proof}  See \cite[Prop. 2.2.4]{BVdB} or \cite[Cor. 3.14]{Rou} for the case when $\TT$ is not required to have arbitrary direct sums.
\end{proof}

\begin{lemma}
\label{lemma_perfcompact}
Let $R$ be a commutative ring. Then $D(R\MMod)^c=\Perf(R)$.
\end{lemma}
\begin{proof}
See \cite[Prop. 6.4]{BN}.
\end{proof}

\begin{lemma}
\label{lemma-d^b-compact} 
Let $X$ be an affine Noetherian scheme. 
Then we have $D^b(\coh X)\subset D^+(\qcoh X)^c$.
\end{lemma}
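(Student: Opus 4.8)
The plan is to pass to the affine picture, using the identifications $D^+(\qcoh X)\simeq D^+(R-\Mod)$ and $D^b(\coh X)\simeq D^b_{fg}(R)\subset D^+(R-\Mod)$ from Section \ref{categories}. Fix $M\in D^b_{fg}(R)$, represented by a bounded complex $M^\bullet$ of finitely generated modules, and fix a family $\{N_\alpha\}$ in $D^+(R-\Mod)$ whose coproduct $Z=\bigoplus_\alpha N_\alpha$ exists in $D^+(R-\Mod)$. I must show that the canonical map $\bigoplus_\alpha\Hom(M,N_\alpha)\to\Hom(M,Z)$ is an isomorphism.

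First I would identify $Z$ concretely. For each $\beta$ the structure morphism $\iota_\beta\colon N_\beta\to Z$ admits a retraction $\pi_\beta\colon Z\to N_\beta$: the universal property of the coproduct gives $\Hom(Z,N_\beta)\cong\prod_\alpha\Hom(N_\alpha,N_\beta)$ (legitimate since $N_\beta\in D^+$), and the morphism $\pi_\beta$ corresponding to the tuple $(\delta_{\alpha\beta}\,\mathrm{id})_\alpha$ satisfies $\pi_\beta\iota_\beta=\mathrm{id}_{N_\beta}$. Applying $H^j$, the map $H^j(\iota_\beta)$ is a split monomorphism of $R$-modules, so $H^j(N_\beta)$ is a direct summand of $H^j(Z)$ for every $j$. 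Choosing a representative of $Z$ concentrated in degrees $\ge m$ we conclude $N_\beta\in D^{\ge m}$ for all $\beta$. Hence, taking representatives $N_\alpha^\bullet$ in degrees $\ge m$, the termwise direct sum of complexes $W^\bullet:=\bigoplus_\alpha N_\alpha^\bullet$ lies in $D^{\ge m}$; it is the coproduct of the $N_\alpha$ in $D(R-\Mod)$, and, lying in the full subcategory $D^+(R-\Mod)$, it is therefore also their coproduct there, so $Z\simeq W^\bullet$.

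Next I would compute both $\Hom$-groups with $K$-injective resolutions. Resolve each $N_\alpha^\bullet$ by a complex $I_\alpha^\bullet$ of injective modules in degrees $\ge m$; a bounded-below complex of injectives is $K$-injective. Since $R$ is Noetherian, arbitrary direct sums of injective modules are injective, so $\bigoplus_\alpha I_\alpha^\bullet$ is again a bounded-below complex of injectives, hence $K$-injective, and it is quasi-isomorphic to $W^\bullet\simeq Z$. Thus $\Hom(M,Z)=H^0\bigl(\Hom^\bullet_R(M^\bullet,\bigoplus_\alpha I_\alpha^\bullet)\bigr)$ and $\Hom(M,N_\alpha)=H^0\bigl(\Hom^\bullet_R(M^\bullet,I_\alpha^\bullet)\bigr)$. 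In degree $j$ the complex $\Hom^\bullet_R(M^\bullet,\bigoplus_\alpha I_\alpha^\bullet)$ equals $\prod_i\Hom_R\bigl(M^i,\bigoplus_\alpha I_\alpha^{i+j}\bigr)$, a \emph{finite} product because $M^\bullet$ is bounded and the $I_\alpha^\bullet$ vanish in degrees $<m$; since each $M^i$ is finitely generated over the Noetherian ring $R$, the functor $\Hom_R(M^i,-)$ commutes with direct sums, and a finite product commutes with direct sums, so this group equals $\bigoplus_\alpha\prod_i\Hom_R(M^i,I_\alpha^{i+j})=\bigoplus_\alpha\Hom^\bullet_R(M^\bullet,I_\alpha^\bullet)^j$. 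This identification respects the differentials, and since direct sums of abelian groups are exact, passing to $H^0$ gives $\Hom(M,Z)\cong\bigoplus_\alpha\Hom(M,N_\alpha)$; one checks this is the canonical map. Hence $M$ is compact in $D^+(\qcoh X)$.

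The computational part is routine once one invokes the two classical facts used above (finitely generated over a Noetherian ring $\Rightarrow$ $\Hom$ commutes with direct sums; Noetherian ring $\Rightarrow$ direct sums of injectives are injective). The step I expect to need the most care is the first one: identifying the abstract coproduct $Z$ formed inside the subcategory $D^+(R-\Mod)$ with the honest termwise direct sum of complexes. A priori a coproduct computed in $D^+$ need not agree with the coproduct in $D(R-\Mod)$, and no uniform lower bound on the $N_\alpha$ is given in advance; the retraction argument, which forces each $N_\beta$ to split off $Z$ on cohomology, is what supplies both.
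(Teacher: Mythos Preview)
Your proof is correct, but it takes a different route from the paper. The paper resolves the \emph{source}: it picks a bounded-above resolution $P\to E$ by finite free $R$-modules (h-projective since $X$ is affine), observes that in the overlap of a bounded-above $P$ with a uniformly bounded-below $\oplus_i J_i$ only finitely many degrees survive, and concludes that any chain map (and homotopy) $P\to\oplus_i J_i$ lands in a finite subsum. You instead resolve the \emph{targets}: you replace each $N_\alpha$ by a bounded-below injective complex $I_\alpha^\bullet$, invoke Bass's theorem that direct sums of injectives over a Noetherian ring are injective to get a K-injective model for $Z$, and then compute termwise using that $\Hom_R(M^i,-)$ preserves sums for finitely presented $M^i$. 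Both arguments rest on Noetherianness and on the uniform lower bound for the $N_\alpha$; your retraction argument for the latter is a nice explicit justification of a point the paper simply asserts. The paper's approach is a bit shorter and avoids the Bass theorem; your approach has the virtue of making the identification of the abstract $D^+$-coproduct with the honest termwise sum completely transparent. One minor remark: in your ``finite product'' step the bound on $I_\alpha^\bullet$ is not actually needed, since boundedness of $M^\bullet$ alone already makes the product over $i$ finite.
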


\begin{proof} Let $\{ F_i\}\subset D^+(\qcoh X)$ be a set of objects such that $\oplus _i F_i\in D^+(\qcoh X)$. Then for some $a\in\Z$ one has $\oplus_i F_i \in D^{\ge a}(\qcoh X)$. It follows that  $F_i \in D^{\ge a}(\qcoh X)$ for any $i$.

Let $E\in D^b(\coh X)$. Choose a perfect complex and a morphism $P\to E$ 
such that $E':=Cone(P\to E)\in D^{\le a-2}(\qcoh X)$. Indeed, one can take $P=\sigma^{\ge a-2}(Q)$ to be the stupid truncation  of a resolution $Q$ of $E$ by free finitely generated $\OO_X$-modules. Then $\Hom(E'[k],F_i)=\Hom(E'[k],\oplus_iF_i)=0$ for $k=-1,0$. Therefore in the commutative square
$$\xymatrix{\oplus_i \Hom(E,F_i) \ar[rr] \ar[d] && \Hom(E,\oplus_i F_i)\ar[d]\\
\oplus_i \Hom(P,F_i)\ar[rr] &&  \Hom(P,\oplus_i F_i) }$$
the vertical arrows are isomorphisms. By Lemma~\ref{lemma_perfcompact} $P$ is compact (in $D(\qcoh X)$ and thus in $D^+(\qcoh X)$), so the lower arrow is an isomorphism. It follows that the upper arrow is also an isomorphism. 
\end{proof}

\section{Regular subcategories of $D^b(\coh X)$}
\label{section_regsubcat}

Recall that $R$ denotes a commutative Noetherian ring and $X=\Spec R$.
We call $R$ \emph{indecomposable} if $R$ is not isomorphic to a direct product of two other rings; this is equivalent to $X$ being connected.

In this section we will prove the following

\begin{theorem}
\label{theorem_regular}
Let $R$ be an indecomposable commutative Noetherian ring. Let $\TT\subset D^b(R\mmod)$ be a thick subcategory which is regular. Then
$\TT=0$ or $\TT=D^b(R\mmod)$ (the second case is possible only if $D^b(R\mmod)$ is regular).
\end{theorem}

\begin{remark} In general, the category $D^b(R\mmod)$ may not be regular for a commutative Noetherian ring $R$, see Example \ref{example_nagata} below. We do not know if $D^b(R\mmod)$ has a classical generator for any commutative Noetherian ring $R$.
\end{remark}

Theorem \ref{theorem_regular} has the following companion which is much easier to prove.
We prefer to start with two proofs of this special case. 
The second given proof of Theorem \ref{theorem_regular0} uses the same methods as the proof of Theorem \ref{theorem_regular} but can be performed without technical difficulties.
Note that  Theorem \ref{theorem_regular0} is a consequence of Theorem \ref{theorem_regular} (since any subcategory in $\Perf (R)$ is also a subcategory in $D^b(R\mmod)$) and  
will not  be used in the proof of Theorem \ref{theorem_regular}.

\begin{theorem}
\label{theorem_regular0}
Let $R$ be an indecomposable commutative Noetherian ring. Let $\TT\subset \Perf(R)$ be a thick subcategory which is regular. Then
$\TT=0$ or $\TT=\Perf (R)$.
\end{theorem}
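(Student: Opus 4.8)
The plan is to reduce the statement to the orthogonality result Corollary \ref{cor_semi} by showing that a regular thick subcategory $\TT \subset \Perf(R)$ automatically splits off as a direct summand. First I would let $G$ be a strong generator of $\TT$, so $\TT = \langle G\rangle_n$ for some $n$. Since $\TT \subset \Perf(R)$ and $\Perf(R) = D(\qcoh X)^c$, the object $G$ is compact in $D(\qcoh X)$. Now form the localizing subcategory $\LL := \langle G\rangle^{\oplus, D(\qcoh X)}$ generated by $G$ inside the big category $D(\qcoh X)$, which has arbitrary direct sums. By Lemma \ref{lemma_neeman}, $\LL \cap D(\qcoh X)^c = \langle G\rangle = \TT$.

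Next I would invoke the general existence of Bousfield localization for a compactly generated localizing subcategory: because $G$ is a single compact object, $\LL$ is the image of a smashing localization of $D(\qcoh X)$, and in particular there is a semi-orthogonal decomposition $D(\qcoh X) = \langle \LL^\perp, \LL\rangle$ in which the projection functors preserve compact objects. Restricting to compact objects yields a semi-orthogonal decomposition $\Perf(R) = \langle \LL^\perp \cap \Perf(R),\ \TT\rangle$; here I use that the right adjoint to the inclusion $\LL \hookrightarrow D(\qcoh X)$ sends compacts to compacts, which holds precisely because $\LL$ is generated by a compact object (equivalently, the localization is smashing). At this point Corollary \ref{cor_semi} applies directly to $\Perf(R)$: the semi-orthogonal decomposition must be orthogonal, $\Perf(R) = (\LL^\perp \cap \Perf(R)) \oplus \TT$, and since $X = \Spec R$ is connected (i.e. $R$ is indecomposable) one of the two summands vanishes. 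If $\TT = 0$ we are done; otherwise $\LL^\perp \cap \Perf(R) = 0$, and since $\mathcal O_X$ is a compact generator of $D(\qcoh X)$ lying in $\Perf(R)$, this forces $\LL^\perp = 0$, hence $\LL = D(\qcoh X)$ and $\TT = \LL \cap \Perf(R) = \Perf(R)$.

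The main obstacle is making rigorous the claim that the Bousfield localization along $\LL$ is smashing with compact-object-preserving projections. The clean way is to cite Neeman's theorem (building on Thomason and Bousfield–Ravenel): if $\SS$ is a set of compact objects in a compactly generated triangulated category $\DD$, then the localizing subcategory generated by $\SS$ is smashing, the quotient $\DD/\langle \SS\rangle^{\oplus}$ is again compactly generated, and $\langle \SS\rangle^{\oplus, \DD} \cap \DD^c$ equals the thick subcategory generated by $\SS$; moreover the inclusion and quotient functors restrict to compacts. I would apply this with $\DD = D(\qcoh X)$ and $\SS = \{G\}$. Everything else is formal: the restriction of a semi-orthogonal decomposition of $\DD$ with compact-preserving projections to $\DD^c$ is a semi-orthogonal decomposition of $\DD^c$, and then Corollary \ref{cor_semi} finishes the argument. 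A subtle point worth spelling out is why $\TT$, a priori only thick in $D^b(R\mmod)$ in the statement, is in fact thick in $\Perf(R)$ and hence equals $\langle G\rangle$ inside $\Perf(R)$; this is immediate since $\TT \subset \Perf(R)$ and thickness is inherited by any intermediate subcategory, and $\langle G\rangle$ is the smallest thick subcategory containing $G$.
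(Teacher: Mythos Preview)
Your overall strategy---produce a semi-orthogonal decomposition of $\Perf(R)$ with $\TT$ as a factor and then invoke Corollary~\ref{cor_semi}---is exactly the strategy of the paper's second proof. But the mechanism you use to obtain the decomposition is flawed, and in fact your argument never uses the regularity hypothesis at all.

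The specific error is the claim that ``the right adjoint to the inclusion $\LL \hookrightarrow D(\qcoh X)$ sends compacts to compacts, which holds precisely because $\LL$ is generated by a compact object (equivalently, the localization is smashing).'' Smashing means that $\LL^\perp$ is closed under coproducts, or equivalently that the quotient functor $D(\qcoh X)\to D(\qcoh X)/\LL$ preserves coproducts; it does \emph{not} imply that the projection $D(\qcoh X)\to \LL$ preserves compact objects. A concrete counterexample: take $R=\Z$ and $G=\Z/p$ for a prime $p$. Then $G$ is perfect and $\LL=\langle \Z/p\rangle^{\oplus}$ is the localizing subcategory of $p$-torsion complexes. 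The Bousfield triangle for $\Z$ is $C\to \Z\to \Z[1/p]$, and neither $C$ nor $\Z[1/p]$ is compact in $D(\Z)$. So the projections do not restrict to $\Perf(\Z)$, and indeed $\TT=\langle \Z/p\rangle\subset \Perf(\Z)$ is a proper nonzero thick subcategory. This should already alarm you: your argument, as written, would apply to this $\TT$ and ``prove'' it is $0$ or all of $\Perf(\Z)$. The reason it does not is precisely that $\langle \Z/p\rangle$ is not regular (any object is killed by a bounded power of $p$, so no finite level $\langle G'\rangle_m$ can contain all $\Z/p^k$).

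The paper's second proof repairs this by using regularity in an essential way: since $\TT$ is regular and the functors $\Hom_{\Perf(R)}(-,N)$ are locally finite on $\TT$ (because everything is perfect), Rouquier's representability theorem for regular categories (\cite[Prop.~4.9, Cor.~4.17]{Rou}) shows these functors are representable in $\TT$. That gives a right adjoint $\Perf(R)\to \TT$ directly, without passing through the big category, and then Corollary~\ref{cor_semi} finishes. If you want to salvage your Bousfield-localization approach, you would need to prove that regularity of $\TT$ forces the projection of each perfect complex to land in $\TT$; this is essentially the content of Proposition~\ref{prop_adjoint} and Lemma~\ref{lemma_TT} in the $D^b$ setting, and is not a formal consequence of smashing.
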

\begin{proof}
There is a classification of thick subcategories in $\Perf(R)$ due to M.\,J.\,Hopkins and A.\,Neeman \cite[Theorem 1.5]{Ne}. 
Let $Z\subset \Spec R$ be a subset, closed under specialization. Denote 
$$\TT_Z=\{C\in \Perf(R)\mid \Supp C\subset Z\}.$$
Then the correspondence $Z\mapsto \TT_Z$ induces a bijection between subsets of $\Spec R$ closed under specialization and thick subcategories of $\Perf (R)$.

Suppose $\TT_Z$ is regular, i.e. $\TT_Z=\langle G\rangle_m$ for some $G\in \Perf (R)$. Then $\Supp G=W\subset Z$ is a closed subset of $\Spec R$ and also $\Supp \langle G\rangle _m=W$. Hence $Z=W$ and so $Z$ is closed.
If $Z=\Spec R$ or $Z=\emptyset$ then $\TT_Z=\Perf(R)$ or $\TT =0$ and the theorem is proved.

Suppose $Z\subset \Spec R$ is a proper nonempty closed subset.
Let $\II_Z\subset R$ denote the ideal of $Z\subset \Spec R$ with the reduced scheme structure. Since $\TT_Z$ is generated by $G$ in a finite number of steps, by Lemma~\ref{lemma_ABC} below all objects of $\TT_Z$ are annihilated by some fixed power of $\II_Z$, say by $\II _Z^n$. 
We claim that $\II_Z^{n}=\II_Z^k$ for any $k\ge n$. 

Indeed, take any $k\ge n$. We would like to consider a perfect complex $C_k$ supported on $Z$ whose annihilator is contained in $\II_Z^k$. In general, the natural candidate $R/\II_Z^k$ is not a perfect complex, so we argue as follows.
Suppose $\II_Z=(f_1,\ldots,f_r)$ for some $f_i\in R$. 
Let $C_k$ be the Koszul complex $C_k=\otimes_{i=1}^r [R[1]\xra{f_i^k} R]$. We have $H^0(C_k)=R/(f_1^k,\ldots,f_r^k)$. Clearly, $C_k$ is perfect and supported on $Z$, therefore $C_k\in \TT_Z$. It follows that $C_k$ is annihilated by $\II_Z^n$. In particular, $\II_Z^n\cdot H^0(C_k)=0$, thus $\II_Z^n\subset (f_1^k,\ldots,f_r^k)\subset \II_Z^k$ for any $k$.

Now choose an irreducible  component $Y$ of $\Spec R$ such that $Y\cap Z\ne \emptyset, Y$.
Such a component exists because $\Spec R$ is connected. Let $\II_Y\subset R$ be the prime ideal of $Y$. Consider the ring $R_Y=R/\II_Y$ and the ideal $\II=(\II_Z+ \II_Y)/\II_Y\subset R_Y$, corresponding to the scheme-theoretical intersection $Y\cap Z$.
Note that $R_Y$ is a Noetherian integral domain and $\II\ne 0, R_Y$. Since $\II _Z^n=\II _Z^{k}$ for any $k\ge n$, it follows that $\II ^n=\II ^{k}$ for any $k\ge n$. By Krull`s Intersection Theorem  (see~\cite[Th. 10.17]{AM}), for every $f\in \cap _{k=1}^\infty \II ^k=\II ^n$ there exists $g\in \II $ such that $f\cdot (1+g)=0$. Since $R_Y$ is an integral domain and $1+g\ne 0$, it follows that $f=0$. Therefore $\II ^n=0$ and hence  $\II =0$ because $R_Y$ is an integral domain. We get a contradiction.
\end{proof}

There is another proof of Theorem~\ref{theorem_regular0} which does not use the classification of thick subcategories in $\Perf(R)$. We sketch this proof below.
\begin{proof}[of Theorem~\ref{theorem_regular0}, another one]
Since $\TT\subset \Perf(R)$, for any $M\in \TT$ and $N\in \Perf(R)$ the $R$-module
$$\oplus_{i\in \Z}\Hom_{\Perf(R)}(M[i],N)$$
is finitely generated. Therefore $\Hom_{\Perf(R)}(-,N)$ is a locally finite cohomological functor from $\TT$ to $R\mmod$, see \cite[Section 4.1.4]{Rou}.  Since $\TT$ is regular, it follows from~\cite[Prop. 4.9 and Cor. 4.17]{Rou} that this functor is representable by an object of $\TT$. Now by standard arguments we conclude that the embedding functor $\TT\to \Perf(R)$ has a right adjoint functor $\Perf(R)\to\TT$, i.e. $\TT$ is a semi-orthogonal component in $\Perf(R)$.
By Corollary~\ref{cor_semi}, we have $\TT=0$ or $\TT=\Perf(R)$.
\end{proof}

We get the immediate

\begin{corollary} If $D^b(R\mmod)\simeq \Perf (R)$ then Theorem \ref{theorem_regular} holds for $R$.
\end{corollary}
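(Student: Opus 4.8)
The plan is to transport Theorem~\ref{theorem_regular0} along the given equivalence, so assume $R$ is indecomposable and fix a triangulated equivalence $\Phi\colon D^b(R\mmod)\xrightarrow{\ \sim\ }\Perf(R)$. Let $\TT\subset D^b(R\mmod)$ be a thick subcategory which is regular. The first step is to observe that $\Phi(\TT)$ is again a thick subcategory, now of $\Perf(R)$: it is a full triangulated subcategory because $\Phi$ is exact and fully faithful, and it is thick because if $A\in\Perf(R)$ is a direct summand of some $B\in\Phi(\TT)$, then $\Phi^{-1}(A)$ is a direct summand of $\Phi^{-1}(B)\in\TT$, hence $\Phi^{-1}(A)\in\TT$ by thickness of $\TT$ in $D^b(R\mmod)$, whence $A\in\Phi(\TT)$.

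The second step is to check that $\Phi(\TT)$ is regular. If $G$ is a strong generator with $\TT=\langle G\rangle_n$, an easy induction on $k$ (using that $\Phi$ commutes with shifts, finite direct sums and cones) gives $\Phi([G]_k)=[\Phi G]_k$, and since $\Phi$ is an equivalence it takes idempotent closures to idempotent closures. Because $\Phi(\TT)$ is thick in $\Perf(R)$ by the first step, the idempotent closure of $[\Phi G]_n$ inside $\Phi(\TT)$ coincides with its idempotent closure inside $\Perf(R)$; hence $\Phi(\TT)=\langle \Phi G\rangle_n$ in $\Perf(R)$, so $\Phi(\TT)$ is regular. Now Theorem~\ref{theorem_regular0} applies to $\Phi(\TT)\subset\Perf(R)$ and yields $\Phi(\TT)=0$ or $\Phi(\TT)=\Perf(R)$; applying $\Phi^{-1}$ gives $\TT=0$ or $\TT=D^b(R\mmod)$, which is exactly the conclusion of Theorem~\ref{theorem_regular} for $R$.

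There is essentially no obstacle here: the entire mathematical content sits in Theorem~\ref{theorem_regular0}, and this corollary is just the formal observation that the hypotheses ``thick'' and ``regular'', the conclusion, and the ambient category are all invariant under triangulated equivalences. The only point worth a line of care is that the operation ``idempotent closure of $[\,\cdot\,]_n$'' is compatible with $\Phi$, which is immediate once one knows that $\Phi(\TT)$ is thick in $\Perf(R)$ (so that computing the closure inside $\Phi(\TT)$ or inside $\Perf(R)$ makes no difference).
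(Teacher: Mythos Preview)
Your proof is correct and matches the paper's approach: the paper states the corollary as ``immediate'' with no proof given, and your argument is precisely the formal transport of Theorem~\ref{theorem_regular0} along the equivalence that the word ``immediate'' is pointing to. The only content is that thickness, regularity, and the categories $\langle G\rangle_n$ are preserved under triangulated equivalences, which you spell out carefully.
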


The necessary condition for the inclusion of the categories 
\begin{equation}
\label{eq_equiv-of-cat}
\Perf (R) \xra{\sim} D^b(R\mmod)
\end{equation}
to be an equivalence
is the regularity of the commutative Noetherian ring $R$. (Recall that a commutative ring $R$ is \emph{regular} if all its local ring are regular.) We do not know if this is sufficient. If $R$ is regular and of finite Krull dimension $d$, then every finitely generated $R$-module has a projective resolution of length $\le d$, hence in particular (\ref{eq_equiv-of-cat}) holds. But finiteness of Krull dimension is not necessary for (\ref{eq_equiv-of-cat}) to hold as the next example shows. 
We learned the following argument from Michel Van den Bergh.

\begin{example}
\label{example_nagata}
There is an example, constructed by Nagata, of a regular Noetherian ring~$R$ of infinite Krull dimension, see \cite[Ex. 11.4]{AM}. Recall the construction. Consider the polynomial ring $K=\kk[x_{i,j}]_{1\le j\le i<\infty}$ over some field $\kk$. Denote $\p_i=(x_{i,1}, x_{i,2},\ldots,x_{i,i})$, it is a prime ideal in $K$. Let $S\subset K$ be the complement to $\cup_i\p_i$ and $R$ be the localization $K[S^{-1}]$.
Note that the localization $R_{\p_i}$ is a regular local ring of Krull dimension $i$. 
Thus the global dimension of $R_{\p_i}$ is $i$ and the 
global dimension of $R$ is infinite.
We claim that nevertheless any finitely generated $R$-module $M$ has a finite projective resolution. Indeed, any such $M$ is isomorphic to the extension of scalars $M\simeq R\otimes _{R_0}M_0$,  over a subring $R_0\subset R$ which is a localization of a polynomial ring in  finitely many variables. Hence $R_0$ is regular and of finite Krull dimension. Moreover, $R$ is flat over $R_0$.
Hence the claim follows. 

It follows that we have an equivalence
$$D^b(R\mmod)\simeq\Perf(R).$$
We remark that these two categories are not regular, though they have a classical generator~$R$. Indeed, suppose $\Perf(R)$ has a strong generator, then by Remark~\ref{remark_allstrong} $R$ is a strong generator of $\Perf(R)$. Assume $\Perf(R)=\langle R\rangle_n$. It follows then from Ghost Lemma (see Lemma~\ref{lemma_ghost}) that $\Ext_R^i(M,N)=0$ for all finitely generated $R$-modules $M,N$ and $i>n$. Therefore 
$\gldim(R)\le n$, a contradiction.
\end{example}

\medskip
Now we turn to the proof of Theorem \ref{theorem_regular} in the general case. 
Let $G\in \TT$ be a strong generator of $\TT$. In the course of the proof it will be convenient for us to use the following strictly full  subcategories of $D(\qcoh X)$, which are introduced in Section~\ref{section_regularcategories}:
\begin{align*}
\TT_k&=\langle G\rangle_k, &\TT&=\cup_k \TT_k=\langle G\rangle;\\
\TT^+_k&=\langle G\rangle^{\oplus, D^+_{\coh}( X)}_k, & \TT^+&=\cup_k \TT^+_k=\langle G\rangle^{\oplus, D^+_{\coh}( X)};\\
\bar\TT_k&=\langle G\rangle^{\oplus, D(\qcoh X)}_k, & \bar\TT&=\cup_k \bar\TT_k=\langle G\rangle^{\oplus, D(\qcoh X)}.
\end{align*}
We point out that $\TT_k,\TT\subset D^b_{\coh}(X)$ and 
$\TT^+_k,\TT^+\subset D^+_{\coh}(X)$.
Note that all these categories are Karoubian, because $D(\qcoh X)$ is such and the categories in question are
by definition idempotent closed in $D(\qcoh X)$.

Next proposition is the main ingredient in the proof of Theorem~\ref{theorem_regular}.
\begin{prop}
\label{prop_adjoint}
The inclusion functor $\TT^+\to D^+_{\coh}(X)$
has a right adjoint functor
$D^+_{\coh}(X)\to \TT^+$.
\end{prop}

This proposition follows from Proposition~\ref{prop_Brown} and Lemma~\ref{lemma_TT} below.
Essentially one needs to show that for any $M\in D^+_{\coh} (X)$ the cohomological functor $\Hom(-,M)$ is representable on $\TT^+$.

For any $M\in D^+_{\coh} (X)$ we will consider the corresponding  cohomological functor on the category $\bar{\TT}$:
$$h^M(-):=\Hom(-,M)\colon \bar{\TT}\to R\MMod$$
 
\begin{definition} 
\label{definition_adm}
We say that a contravariant functor $H\colon \bar{\TT}\to R\MMod$ is {\it admissible} if
\begin{enumerate}
\item $H$ takes direct sums to direct products;
\item $H$ sends objects in $\TT$ to $R\mmod$;
\item for every $N\in \TT$ one has $H(N[k])=0$ for $k\gg 0$.
\end{enumerate}
\end{definition}

\begin{lemma}
\label{lemma_kernelofadmissible}
\begin{enumerate}
\item Let $M\in D^+_{\coh} (X)$. Then the functor $h^M\colon \bar{\TT}\to R\MMod$ is admissible.
\item Let $\beta \colon H_1\to H_2$ be a morphism between admissible functors. Then 
the kernel $\ker \beta$ is an admissible functor.
\end{enumerate}
\end{lemma}
\begin{proof}
\begin{enumerate}
\item
 Property 1 from Definition~\ref{definition_adm} follows from the  definition of a direct sum. 
Property 2 follows from the fact that $\Ext^i(K,L)$ is a finitely generated $R$-module for any $i$ and finitely generated $R$-modules $K$ and $L$, see \cite[Lemma II.3.2 or Prop. II.3.3]{Ha}.
Property 3 is clear, see for example \cite[Prop. 6.2]{Rou}.

\item  Property 3 from Definition~\ref{definition_adm} is obvious, property 2 holds since $R$ is Noetherian. Property 1 follows from the fact that $\ker$ commutes with direct products on the category of $R$-modules.
\end{enumerate}
\end{proof}

\begin{prop}
\label{prop_Brown}
Let $H\colon \bar\TT \to R\MMod$ be an admissible  cohomological functor. Then for any $k$ the functor $H$ restricted to $\bar \TT_k$ is a direct summand of the representable functor $h^C$ for some  $C\in \TT^+$.
\end{prop}

The proof is similar to the arguments in \cite{BVdB}. Essentially, we prove once again Brown representability theorem. The difference with \cite{BVdB} is that the functor is not necessarily of finite type. Consequently, the representing object can be unbounded.

\begin{lemma}
\label{lemma_epi}
Let $H\colon \bar\TT\to R\MMod$ be an admissible contravariant functor. Then there exists an object $A\in \TT^+$ and a morphism
$$\alpha\colon h^A\to H$$
of functors on $\bar \TT$ such that $\alpha$ is surjective on $\bar\TT_0$.
\end{lemma}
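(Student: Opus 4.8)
The plan is to build $A$ as a (possibly infinite) direct sum of shifts of the generator $G$, one summand for each element of each of the groups $H(G[i])$, and to define $\alpha$ on that summand by the corresponding element of $H(G[i])$ via the Yoneda lemma. Concretely, for each $i \in \Z$ the group $H(G[i])$ is an object of $R\mmod$ (since $G \in \TT$ and $H$ is admissible), hence it is a finitely generated $R$-module; moreover $H(G[i]) = 0$ for $i \ll 0$ by admissibility. For each $i$ pick a finite generating set $S_i \subset H(G[i])$, and set
\[
A \;=\; \bigoplus_{i\in\Z}\ \bigoplus_{s\in S_i} G[-i].
\]
Because only finitely many $S_i$ are nonempty for $i$ below any bound — wait, that is not quite enough, since $H(G[i])$ may be nonzero for all $i \gg 0$ — the sum is genuinely infinite in the positive direction, but it is a direct sum of shifts of $G$ and hence lies in $[G]^{\oplus, D^+_{\coh}(X)}_0 \subset \TT^+$, using that shifts of $G$ by arbitrarily negative amounts $[-i]$ with $i\gg 0$ keep the complex cohomologically bounded below (so the sum really is in $D^+_{\coh}(X)$). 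By the universal property of the direct sum, the collection of elements $s \in H(G[i]) = h^?$ — more precisely, each $s \in H(G[i])$ is, by Yoneda applied to the restriction of $H$ along $\Hom(-,G[-i])$, a morphism of functors $h^{G[-i]} \to H$ — assembles into a single morphism $\alpha\colon h^A \to H$ of functors on $\bar\TT$. Here I use that $h^A = \Hom(-,A)$ sends the direct sum $A$ to the product $\prod_{i,s}\Hom(-,G[-i])$ contravariantly, matching the fact that $H$ takes direct sums to direct products, so the componentwise maps glue.

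It remains to check that $\alpha$ is surjective on $\bar\TT_0$. An object of $\bar\TT_0 = [G]^{\oplus, D(\qcoh X)}_0$ is a direct sum $B = \bigoplus_{\mu} G[j_\mu]$ of shifts of $G$. Since $H$ sends direct sums to direct products and $h^B$ likewise turns $B$ into a product, it suffices to prove surjectivity of $\alpha$ on each summand $G[j]$ separately. But $\alpha_{G[j]}\colon \Hom(G[j], A) \to H(G[j])$ already has in its image, by construction, the chosen generators $S_{-j}$ of $H(G[j])$ — each generator $s \in S_{-j}$ is hit by the structure map $G[j] = G[-(-j)] \hookrightarrow A$ of the corresponding summand — and $H(G[j])$ is generated as an $R$-module by $S_{-j}$. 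Since $\alpha_{G[j]}$ is $R$-linear (it is a map of $R$-modules, as $\Hom$-groups in an $R$-linear category are $R$-modules and $H$ is $R$-valued), its image is an $R$-submodule containing a generating set, hence equals $H(G[j])$. This gives surjectivity on $\bar\TT_0$.

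The main obstacle — really the only subtle point — is making sure the representing object $A$ lands in $\TT^+ \subset D^+_{\coh}(X)$ rather than merely in $D(\qcoh X)$: the infinite direct sum over $i \in \Z$ of copies of $G[-i]$ is bounded below precisely because $H(G[i]) = 0$ for $i \ll 0$ forces the shifts $[-i]$ appearing to have $-i$ bounded above, i.e. the summands are uniformly bounded below in cohomological degree; this is exactly where admissibility is used, and it is the feature that distinguishes this argument from the finite-type Brown representability of \cite{BVdB}, where $A$ can be taken bounded. A secondary routine check is that Yoneda and the direct-sum/direct-product compatibility genuinely produce a morphism of functors on all of $\bar\TT$ (not just on $\bar\TT_0$), which is immediate since $h^A$ and $H$ are both defined on $\bar\TT$ and the defining elements live in $H(G[-i])$ with no further constraint.
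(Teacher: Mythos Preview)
Your approach is exactly the paper's --- build $A$ as a sum of shifts of $G$, one block per generator of each $H(G[i])$, and use Yoneda --- but you have two sign errors that prevent the proof from going through as written.

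First, admissibility says $H(G[k])=0$ for $k\gg 0$, not $k\ll 0$. Second, the Yoneda lemma gives $\mathrm{Nat}(h^{X},H)\cong H(X)$, so an element $s\in H(G[i])$ corresponds to a morphism $h^{G[i]}\to H$, not $h^{G[-i]}\to H$; consequently the summand attached to $s\in S_i$ must be $G[i]$, and one should take $A=\bigoplus_i G[i]^{\oplus |S_i|}$. With both corrections the boundedness works: $S_i=\emptyset$ for $i\gg 0$, so only shifts $G[i]$ with $i$ bounded above occur, and such a sum is cohomologically bounded below, hence lies in $\TT^+_0$. Your surjectivity paragraph then needs $S_j$ (not $S_{-j}$) as the generating set of $H(G[j])$, after which the argument is correct and matches the paper verbatim. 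As written, your $\alpha$ is not well-defined (the Yoneda identification is off by a sign), and with the correct admissibility direction your $A$ would land in $D^-_{\coh}(X)$ rather than $D^+_{\coh}(X)$; the fact that the boundedness claim superficially comes out right is only because the two errors interact, not because they cancel.
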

\begin{proof}
By admissibility of $H$ for any $k$ the $R$-module $H(G[k])$ is finitely generated.  Choose  a generating set $e_{k,1}, \ldots, e_{k,d_k}$  for the $R$-module $H(G[k])$.
Let
$$A=\oplus_k G^{\oplus d_k}[k].$$
Since $H(G[k])=0$ for $k\gg 0$, we have $A\in \TT^+$ (even $A \in \TT ^+_0$).
By Yoneda Lemma, one has $\Hom(h^A,H)=H(A)$ where $\Hom$ denotes morphisms of functors on $\bar\TT$. By admissibility of $H$ we have
$$H(A)=\prod H(G[k])^{\oplus d_k}.$$
Thus we have a distinguished element $\xi$ in $H(A)$, corresponding to the family $(e_{k,j})\in \prod H(G[k])^{\oplus d_k}$. Let $\alpha\colon h^A\to H$ be the corresponding morphism of functors on $\bar\TT$.
We claim that $\alpha$ is surjective on any object $G[k]$. 
Indeed, let $s_{k,j}$ denote the composition of the embedding 
$G[k]\to G[k]^{\oplus d_k}$ of the $j$-th summand and the embedding $G[k]^{\oplus d_k}\to \oplus_i G[i]^{\oplus d_i}$. We have a commutative diagram
$$\xymatrix{h^A(A)\ar[rr]^{\alpha(A)} \ar[d]^{h^A(s_{k,j})} && H(A) \ar[d]^{H(s_{k,j})} \\
h^A(G[k]) \ar[rr]^{\alpha(G[k])} && H(G[k]),
}$$
where $\alpha(A)(1_A)=\xi$ by Yoneda Lemma and  $H(s_{k,j})(\xi)= e_{k,j}$ by the definition of $\xi$. Therefore, the image of the map $\alpha(G[k])$ contains all elements $e_{k,j}$ for $j=1,\ldots, d_k$. Thus $\alpha(G[k])$ is surjective.

Since both $h^A$ and $H$ take direct sums to direct products,  $\alpha$ is also surjective on~$\bar \TT_0$.
\end{proof}

\begin{lemma}
\label{lemma_anni}
Let $H\colon \bar\TT\to R\MMod$ be an admissible cohomological functor. Let $A\in \TT^+$ be an object and
$\alpha\colon h^A\to H$ be a morphism
of functors on $\bar \TT$ such that $\alpha$ is surjective on $\bar\TT_0$. Then there exists an object $A'\in \TT^+$, a morphism $a\colon A\to A'$ and a commutative diagram of functors on $\bar\TT$
$$\xymatrix{h^A\ar[rr]^{a_*}\ar[rd]_{\alpha} && h^{A'} \ar[ld]^{\alpha'}\\ & H &}$$
such that $\alpha'$ is surjective on $\bar \TT_0$ and $a_*\colon h^A\to h^{A'}$ annihilates $\ker \alpha$ on $\bar\TT_0$.
\end{lemma}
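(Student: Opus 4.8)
The plan is to reproduce, in the present setting, the cone construction underlying the proof of Brown representability (cf.\ \cite{BVdB}): we attach to $A$ a cone designed to annihilate $\ker\alpha$ on $\bar\TT_0$. Since $A\in\TT^+\subset D^+_{\coh}(X)$, the functor $h^A$ is admissible, hence so is the subfunctor $\ker\alpha=\ker(\alpha\colon h^A\to H)$ by Remark~\ref{kernel-of-admissible}; in particular every $R$-module $(\ker\alpha)(G[k])$ is finitely generated, and it vanishes for $k\gg 0$. First I would choose, for each $k$, finitely many morphisms $G[k]\to A$ lying in $(\ker\alpha)(G[k])$ and generating it over $R$, and assemble them into a single morphism $b\colon B\to A$ with $B=\bigoplus_k G^{\oplus e_k}[k]$. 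As $e_k=0$ for $k\gg 0$, this direct sum exists in $D^+_{\coh}(X)$, so $B\in\TT^+_0$.

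The next step is to check that $b_*\colon h^B\to h^A$ has image exactly $\ker\alpha$ on $\bar\TT_0$. For the inclusion $\im(b_*)\subseteq\ker\alpha$: all the functors involved turn direct sums into direct products (automatically for $h^A$, by admissibility for $H$), so $\alpha$ is compatible with the decompositions $h^A(B)=\prod_k\Hom(G[k],A)^{e_k}$ and $H(B)=\prod_k H(G[k])^{e_k}$; since every component of $b$ lies in $\ker\alpha$, this forces $\alpha_B(b)=0$, that is, by Yoneda on $\bar\TT$, $\alpha\circ b_*=0$. For the reverse inclusion on $\bar\TT_0$: evaluating at $G[k]$, the image of $b_*(G[k])$ contains the $R$-span of the chosen morphisms, which is all of $(\ker\alpha)(G[k])$; passing to arbitrary direct sums of shifts of $G$ and their summands, where image and kernel both commute with products, gives $\im(b_*|_{\bar\TT_0})=\ker\alpha|_{\bar\TT_0}$.

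Finally, complete $b$ to a distinguished triangle $B\xrightarrow{b}A\xrightarrow{a}A'\to B[1]$ in $D(\qcoh X)$; then $A'\in\TT^+$ since $\TT^+$ is triangulated (Lemma~\ref{lemma_generation}). Applying $\Hom(N,-)$ to this triangle for $N\in\bar\TT$ yields long exact sequences, so $a_*\circ b_*=0$ and $\ker(a_*(N))=\im(b_*(N))$; together with the previous step, this says exactly that $a_*$ annihilates $\ker\alpha$ on $\bar\TT_0$. Applying the cohomological functor $H$ to the same triangle gives an exact sequence $H(B[1])\to H(A')\to H(A)\to H(B)$ in which, under the Yoneda identifications $H(A)=\Hom(h^A,H)$, $H(A')=\Hom(h^{A'},H)$ and $H(B)=\Hom(h^B,H)$, the second arrow is precomposition with $a_*$ and the third is precomposition with $b_*$; since the latter sends $\alpha$ to $\alpha\circ b_*=0$, exactness produces $\alpha'\in H(A')$ with $\alpha'\circ a_*=\alpha$, which is the required commutative triangle, and surjectivity of $\alpha'$ on $\bar\TT_0$ follows from that of $\alpha$. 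I expect the main obstacle to be nothing more than the equality $\im(b_*|_{\bar\TT_0})=\ker\alpha|_{\bar\TT_0}$, whose two inclusions are established by genuinely different arguments and must be checked to survive passage to the infinite direct sums allowed in $\bar\TT_0$; the rest is routine manipulation of long exact sequences and the Yoneda lemma.
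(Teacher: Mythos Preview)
Your proof is correct and follows essentially the same route as the paper. The only difference is packaging: where the paper invokes Lemma~\ref{lemma_epi} applied to the admissible functor $H'=\ker\alpha$ to obtain $B\in\TT^+$ with $h^B\to\ker\alpha$ surjective on $\bar\TT_0$, you reproduce that construction by hand (choosing generators of each $(\ker\alpha)(G[k])$), and you spell out more carefully the two inclusions $\im(b_*|_{\bar\TT_0})=\ker\alpha|_{\bar\TT_0}$ that the paper summarizes as ``by construction.''
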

\begin{proof}
Consider the functor $H'=\ker \alpha\colon \bar\TT\to R\MMod$. Since $h^A$ and $H$ are
admissible, $H'$ is also such by Lemma \ref{lemma_kernelofadmissible}. By Lemma~\ref{lemma_epi}, one can choose an object $B\in \TT^+$ and a morphism of functors $h^B\to H'$ on $\bar\TT$ which is surjective on $\bar\TT_0$. Consider the composite map $h^B\to h^A$ of functors on $\bar\TT$. Since $A,B\in \TT^+\subset \bar\TT$ the morphism of functors $h^B\to h^A$ is induced by a map $b\colon B\to A$. Let $A'=Cone(b)$ and $a$ be the natural morphism $A\to A'$. Clearly, $A'\in \TT^+$. By Yoneda Lemma, the sequence
$$\Hom(h^B,H)\gets \Hom(h^A,H)\gets \Hom(h^{A'},H)$$
is isomorphic to
\begin{equation}
\label{eq_BAA}
H(B)\xleftarrow{H(b)} H(A)\xleftarrow{H(a)} H(A').
\end{equation}
Sequence (\ref{eq_BAA}) is exact because $H$ is cohomological.
Obviously, the composition $h^B\xra{b_*} h^A\xra{\alpha} H$ is zero, i.e. $H(b)(\alpha)=0$. Since sequence  (\ref{eq_BAA}) is exact, there is $\alpha'\in H(A')$ such that $H(a)(\alpha')=\alpha$, which means $\alpha'\circ a_*=\alpha$.
$$\xymatrix{
h^B\ar[rr]^{b_*}\ar@{->>}[d] && h^A\ar[rr]^{a_*}\ar@{->>}[d]^{\alpha} && h^{A'}\ar@{-->>}[lld]^{\alpha'}\\
H'\ar@{^{(}->}[rru] && H &&
}$$
Surjectivity of $\alpha$ on $\bar{\TT}_0$ implies the surjectivity of $\alpha'$ on $\bar{\TT }_0$. Also the morphism
$a_*\colon h^A\to h^{A'}$ annihilates $\ker \alpha$ on $\bar\TT_0$ by construction.
\end{proof}

\begin{proof}[of Proposition~{\ref{prop_Brown}}]
Using Lemmas~\ref{lemma_epi} and~\ref{lemma_anni}, one can construct a sequence
$$A_1\xra{a_1} A_2\xra{a_2} A_3\to\ldots$$
in $\TT^+$ and a commutative diagram of functors $\bar\TT\to R\MMod$
$$\xymatrix{h^{A_1}\ar[rr]^{a_{1*}}\ar[rd]^{\alpha_1} && h^{A_2} \ar[rr]^{a_{2*}}\ar[ld]_{\alpha_2}&& h^{A_3}\ar[rr]^{a_{3*}}\ar[llld]^{\alpha_3} &&\ldots \\ & H &&&&&}$$
such that on the category $\bar\TT_0$ all morphisms $\alpha_i$ are surjective and any
$a_{i*}$ annihilates $\ker \alpha_i$. It follows then from \cite[Proposition 2.3.4]{BVdB} that $H$ is a direct summand of the functor $h^{A_{2k+2}}$ on the category $\bar\TT_k$.
\end{proof}

Recall that the category $\TT$ is regular, that is $\TT=\TT_n$ for some $n$.

\begin{lemma}
\label{lemma_TT}  
If $\TT=\TT_n$ then 
$\TT^+\subset \bar\TT_m$ for $m=2n+1$.
\end{lemma}
\begin{proof}
Recall that $\TT^+=\cup_k \TT^+_k=\cup_k \langle G\rangle^{\oplus,D^+_{\coh} (X)}_k$ and $\langle G\rangle^{\oplus,D^+_{\coh}(X)}_k$ is the idempotent closure of $[G]^{\oplus,D^+_{\coh }(X)}_k$ (in $D^+_{\coh} (X)$ or in $D(\qcoh X)$).
Since $\bar\TT_{2n+1}$ is idempotent closed in $D(\qcoh X)$, it suffices to check that $[G]^{\oplus,D^+_{\coh}(X)}_k\subset \bar\TT_{2n+1}$ for every $k$.

Let $L$ be an object in $[G]^{\oplus,D^+_{\coh}(X)}_k$. We claim that there exists a
commutative diagram 
\begin{equation}
\label{eq_LLL}
\xymatrix{L_1\ar[r] \ar[d] & L_2\ar[r]\ar[ld] & L_3\ar[r]\ar[lld]&  \ldots, \\
L &&&}
\end{equation}
where $L_i\in \TT$ and for any $r\in \Z$ one has $Cone(L_i\to L)\in D^{\ge r}_{\coh}(X)$ for $i\gg 0$. Below we will denote such diagrams by $L_1\to L_2\to \ldots \to L$.

We prove this claim by induction in $k$.  Let $k=0$, then $L$ is a direct sum of shifts of $G$. Since $L\in \TT^+\subset D^+_{\coh}(X)$, this sum is at most countable:
$L\simeq\oplus_{i=1}^{\infty} G[d_i]$, and $d_i\to -\infty$ as $i\to +\infty$. One can take  $L_s=\oplus_{i=1}^{s} G[d_i]$.

Now suppose the claim is known for some $k$. Let $L\in [G]^{\oplus,D^+_{\coh} (X)}_{k+1}$. By the definition, $L$ is quasi-isomorphic to a cone of some morphism
$L'\to L''$ where $L'\in [G]^{\oplus,D^+_{\coh }(X)}_{0}$ and $L''\in [G]^{\oplus,D^+_{\coh} (X)}_{k}$.  There exist
diagrams $L'_1\to L'_2\to \ldots \to L'$ and $L''_1\to L''_2\to \ldots \to L''$ 
with $L'_i,L''_i\in\TT$ provided by the induction hypothesis.
Note that $L'_s\in D^b_{\coh}(X)$ and for any $r\in\Z$ we have $Cone(L''_i\to L'')\in D^{\ge r}_{\coh}(X)$ for $i\gg 0$. Therefore for fixed $s$ and $i\gg 0$ one has an isomorphism
$$\Hom(L'_s,L''_i)\simeq \Hom(L'_s,L'').$$
Hence, the composition $L'_s\to L'\to L''$ factors through  $L''_i$ for $i\gg 0$.
Replacing $L''_i$ by a subsequence, we can obtain a commutative diagram
$$\xymatrix{L'_1\ar[r] \ar[d] & L'_2 \ar[r]\ar[d] & \ldots \ar[r] & L'\ar[d] \\
L''_1\ar[r]  & L''_2 \ar[r] & \ldots \ar[r] & L'',
}$$
whose rows satisfy the requirements of the claim. Now take $L_s:=Cone(L'_s\to L''_s)\in \TT$. 
There exist morphisms $L_s\to L$ completing the diagram
$$\xymatrix{
L'_s\ar[r]\ar[d] & L''_s\ar[r]\ar[d] & L_s\ar[r]\ar@{-->}[d] & L'_s[1]\ar[d] \\
L'\ar[r] & L''\ar[r] & L\ar[r] & L'[1].
}$$
Since for any $r\in \Z$ one has $Cone(L'_i\to L'), Cone(L''_i\to L'')\in D^{\ge r}_{\coh}(X)$ for $i\gg 0$, we get that $Cone(L_i\to L)\in D^{\ge r}_{\coh}(X)$ for $i\gg 0$.
Again, passing to a subsequence in $L_i$ we may assume that $\Hom(L_i,L_{i+1})=\Hom(L_i,L)$ for any $i$. Thus there exists a commutative diagram
(\ref{eq_LLL})
with the required properties, the claim is proven.

To finish the proof of Lemma \ref{lemma_TT} we use the construction of the homotopy colimit (see \cite{BN}), which is recalled below.  Consider the
complex
$$\hocolim L_i:=Cone(\oplus_i L_i\xra{1-\sigma} \oplus_i L_i),$$
where $\sigma$ denotes the collection of embeddings $L_i\to L_{i+1}$. 
Consider the morphism $\oplus_i L_i\to L$ defined by morphisms $L_i\to L$ from above.
Clearly, its composition with $1-\sigma$ is zero, hence it induces some  morphism $\delta \colon \hocolim L_i\to L$. We claim that $\delta$ is a quasi-isomorphism. Indeed, for each $p\in\Z$ we have a commutative diagram
$$\xymatrix{
\oplus_i H^p(L_i)\ar[r]^{1-\sigma} & \oplus_i H^p(L_i)\ar[r]\ar[rd]& H^p(\hocolim L_i)\ar[r]
\ar[d]^{H^p(\delta)} & 
\oplus_i H^{p+1}(L_i)\ar[r]^{1-\sigma} & \oplus_i H^{p+1}(L_i).\\
&& H^p(L) &&
}$$ 
Here the maps $1-\sigma$ in the top row are injective and the maps $H^p(L_i)\to H^p(L)$ are isomorphisms for $i\gg 0$. Therefore, the map $H^p(\delta)$ is also an isomorphism for any $p$.
It follows that the map 
$\delta \colon \hocolim L_i\to L$ is a quasi-isomorphism.

By assumption, $L_i\in \TT=\TT_n\subset \bar \TT_n$ for any $i$. Since $\bar \TT_n$ is closed under all direct sums, we get $\oplus_i L_i\in \bar\TT_n$. Finally, by Lemma~\ref{lemma_generation}, we have $L\simeq \hocolim L_i\in \bar\TT_{2n+1}$.
\end{proof}

Now we are ready for the
\begin{proof}[of Proposition~{\ref{prop_adjoint}}]
Let $M\in D^+_{\coh}(X)$ be an object. We need to demonstrate that the functor $h^M$ on $\TT^+$ is representable by an object of $\TT^+$.
By Lemma~\ref{lemma_TT} one has $\TT^+\subset \bar \TT_m$ for some $m$.
The functor $h^M\colon \bar {\TT}\to R\MMod$ is cohomological and admissible by Lemma~\ref{lemma_kernelofadmissible}.1.
Hence, by Proposition~\ref{prop_Brown} the restriction of the functor $h^M$ to the subcategory $\bar\TT_m$ is a direct summand of a representable functor $h^{C}$ for some $C\in \TT^+\subset \bar\TT_m$.
Consider the projector $h^C\to h^C$ onto this summand  as a morphism of functors on $\bar\TT_m$. By Yoneda Lemma it comes from a projector $\pi\colon  C\to C$. Since the category $\TT^+$ is Karoubian, projector $\pi$ splits, let $C_M\in \TT^+$ be the image of $\pi$. Clearly, $h^M\simeq h^{C_M}$ as functors on $\bar\TT_m$ and hence also as functors on~$\TT^+$. This proves that $h^M$ is representable on $\TT^+$. 

A standard argument proves (using Yoneda Lemma) that the correspondence $M\mapsto C_M$ is a functor and that there exists a functorial isomorphism $\Hom(N,M)\simeq \Hom(N,C_M)$ for $N\in \TT^+, M\in D^+_{\coh}(X)$. We leave this to the reader.
\end{proof}

\bigskip

\begin{proof}[of Theorem~{\ref{theorem_regular}}]
Let $G\in \TT$ be a generator, suppose $G\ne 0$. 
By Proposition~\ref{prop_adjoint}, for any $M\in D^b(\coh X)$ there exists $C_M\in \TT^+$ and a morphism $\eta_M\colon C_M\to M$ (adjunction counit) such that $\eta_M$ induces an
isomorphism
$$\Hom(N,C_M)\to \Hom(N,M)$$
for any $N\in \TT$. Let $E_M$ be the cone of $\eta_M$. Then clearly $E_M$ is right orthogonal to $\TT$ and hence
\begin{equation}
\label{eq_EG}
E_M\in G^{\perp}\cap  D^+_{\coh}(X).
\end{equation}

First, we check that the support $\Supp G$ of $G$ equals to all $X$. Take $M=\OO_X$ and consider the distinguished triangle $C_{\OO_X}\to \OO_X\to E_{\OO_X}$. It provides an exact sequence $H^0(C_{\OO_X})\to \OO_X\to H^0(E_{\OO_X})$.
One has $$X=\Supp\OO_X\subset \Supp H^0(C_{\OO_X})\cup \Supp H^0(E_{\OO_X}).$$
Since $C_{\OO_X}\in \TT^+$, we have $\Supp H^0(C_{\OO_X})\subset \Supp G$. Consequently,
$$X=\Supp G\cup \Supp H^0(E_{\OO_X}).$$
Hence $X$ is covered by two closed subsets.
Suppose that $\Supp G$ is not equal to $X$. Then $\Supp H^0(E_{\OO_X})\ne\emptyset$. Since $R$ is indecomposable, $X$ is connected. It follows that $\Supp G\cap \Supp H^0(E_{\OO_X})\ne\emptyset$. Recall that $E_{\OO_X}\in D^+_{\coh}(X)$.
Therefore by Lemma~\ref{lemma_supp} one has $\Hom^{\bul}(G,E_{\OO_X})\ne 0$, 
we get a contradiction with (\ref{eq_EG}).

Second, we claim that $D^b(\coh X)\subset \TT^+$. Take any $M\in D^b(\coh X)$. 
Then by (\ref{eq_EG}) and Corollary~\ref{cor_support} we get 
$E_M\in G^{\perp}\cap D^+_{\coh}(X)=0$.
Thus $M\simeq C_M\in \TT^+$. We proved that $D^b(\coh X)\subset \TT ^+$.

Finally, we argue that $D^b(\coh X)\subset \TT$.
By Lemma \ref{lemma-d^b-compact}, $D^b(\coh X)\subset D^+(\qcoh X)^c$, hence also $D^b(\coh X)\subset D^+_{\coh }(X)^c$.
In particular the object $G$ is compact in $D^+_{\coh}(X)$.
By Lemma~\ref{lemma_neeman} applied to the compact object $G$ in $D^+_{\coh}(X)$, we have
$$\TT^+\cap (D^+_{\coh} (X))^c=\TT.$$
Therefore
$$D^b(\coh X)=D^b(\coh X)\cap \TT^+\subset D^+_{\coh}(X)^c\cap \TT^+=\TT$$
which completes the proof of
Theorem~\ref{theorem_regular}.
\end{proof}

Theorem~\ref{theorem_regular} has a natural generalization to disconnected affine schemes.
\begin{corollary}
\label{corollary_regular}
Let $R_1,\ldots, R_n$ be commutative Noetherian indecomposable rings.
Let $R=\prod_{i=1}^n R_i$. Then any thick regular subcategory $\TT\subset D^b(R\mmod)$ has the form
$$\prod_{i\in I} D^b(R_i\mmod)$$
for some $I\subset \{1,\ldots,n\}$.
\end{corollary}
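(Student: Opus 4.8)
The strategy is to reduce to the indecomposable case (Theorem~\ref{theorem_regular}) by decomposing $D^b(R\mmod)$ into the product of the $D^b(R_i\mmod)$ and showing that any thick regular subcategory respects this product decomposition. First I would record the standard fact that since $R=\prod_{i=1}^n R_i$, the scheme $X=\Spec R$ is the disjoint union $\coprod_i X_i$ with $X_i=\Spec R_i$, so $\coh X = \prod_i \coh X_i$ and hence $D^b(R\mmod)\simeq \prod_{i=1}^n D^b(R_i\mmod)$. Concretely, the idempotents $e_i\in R$ cutting out the factors give rise to orthogonal "projection" functors $\pi_i\colon D^b(R\mmod)\to D^b(R_i\mmod)$ (tensoring with $R_i$), each of which is the restriction to $X_i$, and every object $M$ decomposes canonically as $\bigoplus_i \pi_i(M)$ with $\Hom(\pi_i(M),\pi_j(N))=0$ for $i\neq j$.

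Next I would show that a thick regular subcategory $\TT\subset D^b(R\mmod)$ decomposes as $\TT=\prod_i \TT_i$ where $\TT_i:=\pi_i(\TT)\subset D^b(R_i\mmod)$. The key point is that $\TT$ is thick (hence Karoubian, by Proposition~\ref{prop_karoubian}(3)): if $M\in\TT$, then since the $e_i M \simeq \pi_i(M)$ are direct summands of $M$ via the orthogonal idempotents, each $\pi_i(M)$ lies in $\TT$. Therefore $\TT=\bigoplus_i \TT_i$ as a subcategory, with $\TT_i$ a thick subcategory of $D^b(R_i\mmod)$ (thickness of each $\TT_i$ in its factor follows from thickness of $\TT$ in the product together with the orthogonality of the factors). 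Moreover each $\TT_i$ is regular: if $G$ is a strong generator of $\TT$ with $\TT=\langle G\rangle_m$, then applying the exact functor $\pi_i$ shows $\pi_i(G)$ classically generates $\TT_i$ in $m$ steps, i.e. $\TT_i=\langle \pi_i(G)\rangle_m$, so $\TT_i$ is regular. (One must note $\pi_i$ sends $[G]_k$ into $[\pi_i(G)]_k$ and commutes with taking direct summands, which is immediate since $\pi_i$ is an exact functor between idempotent-complete categories.)

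Finally I would apply Theorem~\ref{theorem_regular} to each factor: since $R_i$ is indecomposable Noetherian and $\TT_i\subset D^b(R_i\mmod)$ is thick and regular, we get $\TT_i=0$ or $\TT_i=D^b(R_i\mmod)$. Setting $I=\{\,i : \TT_i\neq 0\,\}$ then yields $\TT=\prod_{i\in I} D^b(R_i\mmod)$, as claimed. The only step requiring genuine care — and the one I expect to be the main (minor) obstacle — is the verification that regularity passes to the factors and that the product decomposition of $\TT$ is really internal, i.e. that $\TT$ contains all "mixed" objects $\bigoplus_{i\in I} M_i$ with $M_i\in\TT_i$ and no others; this is exactly where thickness (Karoubian-ness) of $\TT$ inside the product is used, and it is worth spelling out that a finite orthogonal direct sum of objects in $\TT$ lies in $\TT$ because each summand is a direct summand of the sum. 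Everything else is formal bookkeeping with the orthogonal decomposition $D^b(R\mmod)=\prod_i D^b(R_i\mmod)$.
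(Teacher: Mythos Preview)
Your proposal is correct and follows essentially the same approach as the paper: decompose $D^b(R\mmod)\simeq\prod_i D^b(R_i\mmod)$, use thickness of $\TT$ to split $\TT=\prod_i\TT_i$ with each $\TT_i$ thick in its factor, observe that a strong generator $G=(G_1,\ldots,G_n)$ of $\TT$ in $k$ steps yields strong generators $G_i$ of each $\TT_i$ in $k$ steps, and then apply Theorem~\ref{theorem_regular} componentwise. The paper's argument is slightly terser but identical in substance.
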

\begin{proof}
Suppose $\TT=\langle G\rangle_k$ is a thick regular subcategory in $D^b(R\mmod)$.  Note that
$$D^b(R\mmod)\simeq \prod_{i=1}^n D^b(R_i\mmod).$$
An object $M=(M_1,\ldots,M_n)\in D^b(R\mmod)$ is isomorphic to the direct sum
$$\oplus_{i=1}^n (0,\ldots,M_i,\ldots,0).$$
If $M\in \TT$ then also $(0,\ldots,M_i,\ldots,0)\in \TT$ for every $i$ (because $\TT$ is thick)
and we get
$$\TT=\prod_{i=1}^n\TT_i$$
for some thick subcategories $\TT_i\subset D^b(R_i\mmod)$. Let $G=(G_1,\ldots,G_n)$.
One can easily check that $\TT_i=\langle G_i\rangle_k$, hence any $\TT_i$ is regular. By Theorem~\ref{theorem_regular}, for any $i$ one has either $\TT_i=0$ or $\TT_i=D^b(R_i\mmod)$. The statement follows.
\end{proof}

Using Theorem~\ref{theorem_regular} one can study fully faithful functors between derived categories of affine schemes. The result is that there are no non-trivial ones. 

\begin{theorem}
\label{theorem_appl}
Let $X\neq \emptyset$ and $Y$ be two Noetherian affine schemes. Assume that $Y$ is connected and that the category $D^b(\coh X)$ is regular (for example, $X$ can be a scheme of finite type over a perfect field).
Then any fully faithful functor $D^b(\coh X)\to D^b(\coh Y)$ is an equivalence.
\end{theorem}

\begin{proof} Indeed, by Proposition \ref{prop_karoubian} the category $D^b(\coh X)$ is Karoubian, hence its essential image in $D^b(\coh Y)$ is a thick subcategory. Now the assertion is an immediate consequence of Theorem \ref{theorem_regular}.
\end{proof}

We came across the following question in the process of working on Theorem \ref{theorem_regular}. We still do not know the answer.

\medskip

\noindent{\bf Question.} Is there an example of a Noetherian integral domain $R$ and a nonzero full triangulated subcategory $\TT \subset D^b(R\mmod)$ with the following property: there exists a nonzero $f\in R$ such that for any object $E\in \TT$ the cohomology $H^\bullet (E)$ is annihilated by~$f$?

\begin{remark} 
One can demonstrate that the analogous question about a triangulated subcategory $\TT \subset \Perf(R)$ has the negative answer. Indeed, passing to the thick closure one can assume such subcategory $\TT$ to be thick.
Recall the Hopkins-Neeman bijection (see the proof of Theorem~\ref{theorem_regular0}) between thick subcategories in $\Perf (R)$ and closed under specialization subsets in $\Spec R$: the subset $Z\subset \Spec R$ corresponds to the thick subcategory $\TT_Z=\{C\in \Perf(R)\mid \Supp C\subset Z\}$ in $\Perf (R)$. Hence, it suffices to consider $\TT=\TT_{Z_0}$ for some  non-empty subset  $Z_0\subset \Spec R$, closed under specialization. Suppose that for some $f\in R$ any cohomology of any object in $\TT_{Z_0}$ is annihilated by $f$. Take some closed point $z\in Z_0$, let $Z=\{z\}$ and $\II_Z$ be the ideal of $Z$. Applying the arguments from the first proof of Theorem \ref{theorem_regular0} to this $Z$, we get that $f\in \cap_i \II_Z^i=0$.  

As far as we know there is no classification of thick subcategories in $D^b(R\mmod)$. An answer to the above question would be a step in this direction.
\end{remark}

\section{$f$-approximations}
\label{section_f}

Recall that $R$ denotes a commutative Noetherian ring and $X=\Spec R$.

In Section \ref{section_regsubcat} we studied thick regular subcategories of $D^b(R\mmod)$. 
In this section we consider a more general situation: a regular triangulated category $\TT$ and a functor (not necessarily full or faithful) $\TT\to D^b(R\mmod)$. We prove that if the image of this functor contains a collection of objects with certain properties, then the Rouquier dimension of $\TT$ cannot be smaller than a certain number. An example of such situation is described in Corollary \ref{cor_bounddim}. We then apply this result to the study of point-like objects in regular categories in Section~\ref{section_pointlike}. The proof of the main Theorem \ref{theorem_main} is based on the notion of an $f$-approximation (specifically on Lemma \ref{lemma_main}) combined with the well-known Ghost Lemma. As a byproduct of these methods we also obtain an application to cohomological annihilator of $R$, see Theorem \ref{theorem_ca}.

\begin{definition} 
\label{definition_fapprox}
Let $\TT$ be an $R$-linear triangulated category. Consider objects $G,M\in \TT$ and an element $f\in R$.
We say that $M$ can be \emph{$f$-approximated by $G$ in $n$ steps}, denoted $M\in_f [G]_n$, if there exists an object $P\in [G]_n$ and a morphism $\phi\colon P\to M$ such that $Cone(\phi )$ is annihilated by $f$.

We say that $M$ can be \emph{classically $f$-approximated by $G$ in $n$ steps}, denoted $M\in_f \langle G\rangle_n$, if there
exists $M'\in \TT$ such that $M\oplus M'\in_f [G]_n$.
\end{definition}

The next lemma is well known, see, for example, \cite[Theorem 3]{Ke} or \cite[Lemma 4.11]{Rou}.
\begin{lemma}[(Ghost Lemma)]
\label{lemma_ghost}
Let $F_0\xra{g_1}F_1\xra{g_2} F_2\to \ldots \xra{g_{n+1}} F_{n+1}$ be morphisms in a triangulated category $\TT$, denote $g=g_{n+1}\ldots g_2g_1$. Suppose that $G\in\TT$ is an object such that the map $\Hom^i(G,F_{k-1})\xra{(g_k)_*}\Hom^i(G,F_k)$ vanishes for any $i$ and $k=1,\ldots ,n+1$. Then the map $\Hom^i(Q,F_{0})\xra{g_{*}}\Hom^i(Q,F_{n+1})$ vanishes for any $i$ and $Q\in \langle G\rangle_n$.
\end{lemma}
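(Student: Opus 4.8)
The plan is to prove the Ghost Lemma by induction on $n$. First I would recall the classical single-morphism (ghost) vanishing principle: if $h\colon F\to F'$ is a morphism such that $\Hom^i(P,F)\xra{h_*}\Hom^i(P,F')$ vanishes for all $i$, and $Q$ is a direct summand of $P$ or a shift of $P$, then $\Hom^i(Q,F)\xra{h_*}\Hom^i(Q,F')$ vanishes for all $i$ as well. This is immediate since $\Hom^i(Q,-)$ is a direct summand of a shift of $\Hom^i(P,-)$. So the statement holds trivially for objects in $[P]_0$, and passing to idempotent closure, for objects in $\langle P\rangle_0$.

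Next, the inductive step. Suppose the conclusion holds for $n-1$: whenever we have a composable chain of $n$ ghost maps (with respect to $P$), the total composite kills $\Hom^i(Q',-)$ for any $Q'\in\langle P\rangle_{n-1}$. Now take the chain $F_0\xra{g_1}\cdots\xra{g_{n+1}}F_{n+1}$ of $n+1$ ghost maps and an object $Q\in\langle P\rangle_n$. By definition of $\langle P\rangle_n$, the object $Q$ is a direct summand of some $Q''$ fitting in a distinguished triangle $Q_{n-1}\to Q''\to Q_0\to Q_{n-1}[1]$ with $Q_0\in[P]_0$ and $Q_{n-1}\in[P]_{n-1}$. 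Since vanishing of the composite on $Q$ follows from vanishing on $Q''$, it suffices to treat $Q''$. Write $g=g_{n+1}\cdots g_1$ and factor it as $g=g'\circ g_1$ where $g'=g_{n+1}\cdots g_2$ is a composite of $n$ ghost maps. Given $\xi\in\Hom^i(Q'',F_0)$, I would apply $\Hom^i(-,F_0)$ and $\Hom^i(-,F_1)$ to the triangle above and use the long exact sequences together with the octahedral/triangle argument: the restriction $\xi|_{Q_{n-1}}\in\Hom^i(Q_{n-1},F_0)$ becomes, after applying $g'$, zero in $\Hom^i(Q_{n-1},F_{n+1})$ by the induction hypothesis (since $Q_{n-1}\in[P]_{n-1}\subset\langle P\rangle_{n-1}$ and $g'$ is a chain of $n$ ghosts); meanwhile the image of $\xi$ under $(g_1)_*$ restricted to $Q_0$ is zero in $\Hom^i(Q_0,F_1)$ because $Q_0\in[P]_0$ and $g_1$ is a ghost. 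Chasing these two vanishings through the long exact sequence associated to the triangle $Q_{n-1}\to Q''\to Q_0$ shows that $g_*\xi=0$ in $\Hom^i(Q'',F_{n+1})$.

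Concretely: from the triangle, $(g_1)_*\xi\in\Hom^i(Q'',F_1)$ maps to $0$ in $\Hom^i(Q_0,F_1)$, hence factors through $\Hom^i(Q_{n-1}[1],F_1)=\Hom^i(Q_{n-1},F_1[-1])$, i.e. there is $\zeta\in\Hom^{i-1}(Q_{n-1},F_1)$ whose image along $Q''\to Q_{n-1}[1]$ (up to the shift bookkeeping) gives $(g_1)_*\xi$. Applying $g'_*$ and using that $g'_*$ kills $\Hom^\bullet(Q_{n-1},F_1)\to\Hom^\bullet(Q_{n-1},F_{n+1})$ by the induction hypothesis (note $g'$ is $g_{n+1}\cdots g_2$, a composite of $n$ ghost maps, so the $n-1$-step hypothesis applies to chains of $n$ ghosts and the object $Q_{n-1}\in\langle P\rangle_{n-1}$), we get that $g'_*\zeta=0$, and therefore $g_*\xi=g'_*(g_1)_*\xi=0$. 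This completes the induction.

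The main obstacle I anticipate is purely bookkeeping: carefully tracking which composite of how many ghost maps acts on which term of the defining triangle of $\langle P\rangle_n$, and making the degree/shift indices in the long exact sequences consistent, so that the induction hypothesis is invoked with exactly the right number of ghost maps ($n$, not $n+1$) and the right object ($Q_{n-1}\in\langle P\rangle_{n-1}$). There is no geometric or homological subtlety beyond this; the single-ghost vanishing and the long exact sequence of a triangle do all the real work, and the statement for $\langle P\rangle_n$ (as opposed to $[P]_n$) is handled at the very end just by passing to direct summands, which is harmless since $\Hom^i(Q,-)$ is a retract of $\Hom^i(Q'',-)$.
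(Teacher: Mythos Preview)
The paper does not actually prove this lemma; it only cites \cite[Theorem 3]{Ke} and \cite[Lemma 4.11]{Rou}. Your inductive approach via the defining triangle and the long exact sequence is exactly the standard one used in those references, so in spirit you are on target.

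However, the bookkeeping error you yourself anticipated does occur, and it is not cosmetic. With the triangle written as $Q_{n-1}\xra{\iota} Q''\xra{\pi} Q_0\to Q_{n-1}[1]$, the long exact sequence obtained from $\Hom(-,F_1)$ reads
\[
\cdots \to \Hom(Q_{n-1}[1],F_1)\xra{\delta^*}\Hom(Q_0,F_1)\xra{\pi^*}\Hom(Q'',F_1)\xra{\iota^*}\Hom(Q_{n-1},F_1)\to\cdots,
\]
so there is \emph{no} map $\Hom(Q'',F_1)\to\Hom(Q_0,F_1)$ along which $(g_1)_*\xi$ could ``map to $0$'', and no way for it to ``factor through $\Hom(Q_{n-1}[1],F_1)$''. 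What you would get from $\iota^*$ is $g_1\circ(\xi\circ\iota)\in\Hom(Q_{n-1},F_1)$, and this has no reason to vanish: $g_1$ is only a ghost for objects of $[P]_0$, while $Q_{n-1}\in[P]_{n-1}$.

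The clean fix is to reverse the order in which you peel off the ghosts. Apply the first $n$ ghosts together: set $g'':=g_n\cdots g_1$. By the induction hypothesis (a chain of $n$ ghosts kills $\Hom^{\bullet}(Q',-)$ for $Q'\in\langle P\rangle_{n-1}$), we have $\iota^*\bigl((g'')_*\xi\bigr)=g''\circ(\xi\circ\iota)=0$ in $\Hom(Q_{n-1},F_n)$. Exactness then gives $\beta\in\Hom(Q_0,F_n)$ with $(g'')_*\xi=\beta\circ\pi$. Finally apply the last ghost $g_{n+1}$: since $Q_0\in[P]_0$, we get $g_{n+1}\circ\beta=0$, hence $g_*\xi=g_{n+1}\circ\beta\circ\pi=0$. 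Passing to the direct summand $Q$ of $Q''$ is harmless, as you said. This is precisely the argument in \cite[Lemma 4.11]{Rou}.
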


A proof of the following lemma can be found, for example, in~\cite[Th. 2.10]{IT}. We include a proof for the convenience of the reader.

\begin{lemma}
\label{lemma_sss}
Let $R$ be a commutative Noetherian ring and $\TT=D(R\MMod)$. Fix an element $f\in R$ and let $M\in_f \langle R\rangle_n$ for some $n$.
\begin{enumerate}
\item Let $S_0,\ldots, S_{n+1}$ be $R$-modules and let $\theta_k\colon S_{k-1}\to S_k[1]$ be morphisms in $\TT$ for $k=1,\ldots ,n+1$. Then the image of the map
$$\Hom(M,S_0)\xra{(\theta_{n+1}\ldots \theta_1)_*} \Hom(M,S_{n+1}[n+1])$$ is annihilated by $f$.

\item Suppose moreover that $M$ is just an $R$-module. Then for any $R$-module $N$ and $m>n$ one has
$$f\cdot \Ext^m(M,N)=0.$$
\end{enumerate}
\end{lemma}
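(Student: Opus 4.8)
The plan is to prove part (1) first and then derive part (2) as a corollary by unwinding the definition of $\Ext$ via a projective resolution.

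\medskip

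\textbf{Part (1).} Since $M\in_f\langle R\rangle_n$, by definition there is an object $M'\in\TT$ and a morphism $\phi\colon P\to M\oplus M'$ with $P\in[R]_n$ such that $Cone(\phi)$ is annihilated by $f$; call this cone $Z$, so $fZ=0$ (meaning $f\cdot\id_Z=0$ in $\TT$). First I would reduce to the situation where the source is an object generated by $R$ in $n$ steps: given $\th_k\colon S_{k-1}\to S_k[1]$, I want to show that $f$ kills the image of $(\th_{n+1}\cdots\th_1)_*\colon\Hom(M,S_0)\to\Hom(M,S_{n+1}[n+1])$. It is harmless to replace $M$ by $M\oplus M'$ (a class coming from $M$ stays in the $\Hom(M,-)$ summand), so assume $M$ itself fits in a triangle $P\xra{\phi}M\to Z\to P[1]$ with $P\in[R]_n$ and $fZ=0$. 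Next, the key input is the Ghost Lemma (Lemma~\ref{lemma_ghost}) applied with the object $R$: for any $R$-module $S$ and any $i$, the map $\Hom^i(R,S_{k-1})\to\Hom^i(R,S_k)$ induced by $\th_k$ — wait, here the maps shift degree, so I should instead phrase it as follows. Set $F_k = S_k[k]$ for $k=0,\dots,n+1$, so that $\th_k$ induces $g_k\colon F_{k-1}=S_{k-1}[k-1]\to S_k[k]=F_k$ (it is $\th_k[k-1]$). Since $S_k$ is an $R$-module, for each fixed $i$ the group $\Hom^i(R,S_k[k]) = \Hom_{D(R)}(R,S_k[k+i]) = H^{k+i}(S_k)$ which is $S_k$ if $k+i=0$ and $0$ otherwise; in particular the composite through two consecutive $F$'s is forced to pass through a vanishing group unless... hmm, that is too crude. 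The honest route: the maps $g_{k*}\colon\Hom^i(R,F_{k-1})\to\Hom^i(R,F_k)$ need not vanish, so I cannot apply the Ghost Lemma to $R$ directly.

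\medskip

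\textbf{Revised approach to Part (1).} The right way is: a morphism $\psi\colon S_{k-1}\to S_k[1]$ between two modules, viewed in $D(R)$, is \emph{ghost} with respect to $R$ in the sense that $\psi_*\colon\Hom^i(R,S_{k-1})\to\Hom^i(R,S_k[1])$ vanishes for all $i$ — indeed $\Hom^i(R,S_{k-1})=H^i(S_{k-1})$ is concentrated in degree $0$, while $\Hom^i(R,S_k[1])=H^{i+1}(S_k)$ is concentrated in degree $-1$, so the source and target are never simultaneously nonzero and the map is zero. Therefore, taking $P_0=S_0$, $P_k=S_k[k]$ and the connecting maps $\th_k[k-1]$, each one is ghost for $R$; by the Ghost Lemma the composite $g=(\th_{n+1}\cdots\th_1)_*$ vanishes on $\Hom^i(Q,S_0)$ for every $Q\in\langle R\rangle_n$, in particular for $Q=P$. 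Now use the triangle $P\xra{\phi}M\to Z\xra{+1}$: applying $\Hom(-,S_{n+1}[n+1])$ and $\Hom(-,S_0)$ gives a commuting square relating $\phi^*$ on the two columns, and the composite map $\Hom(M,S_0)\to\Hom(M,S_{n+1}[n+1])\xra{\phi^*}\Hom(P,S_{n+1}[n+1])$ equals $g\circ\phi^*=0$ by the previous step. Hence the image of $\Hom(M,S_0)\to\Hom(M,S_{n+1}[n+1])$ lies in the kernel of $\phi^*$, which is the image of $\Hom(Z[1],S_{n+1}[n+1])\to\Hom(M,S_{n+1}[n+1])$; but $Z$ is annihilated by $f$, so $\Hom(Z[1],-)$ is annihilated by $f$, and therefore so is that image. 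This proves (1).

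\medskip

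\textbf{Part (2).} Now suppose $M$ is an $R$-module, $N$ is an $R$-module and $m>n$, so $m\ge n+1$. Choose a projective (even free) resolution $\cdots\to F_1\to F_0\to M\to 0$; the brutal truncation realises $M$ in $D(R)$ as built from the modules $F_j$, and the standard device is to write $M$ as an iterated extension expressing the syzygies: let $K_j=\im(F_j\to F_{j-1})=\ker(F_{j-1}\to F_{j-2})$ be the syzygy modules, so there are short exact sequences $0\to K_{j+1}\to F_j\to K_j\to 0$ (with $K_0=M$), giving connecting morphisms $\th_j\colon K_{j-1}\to K_j[1]$ in $D(R)$. Set $S_0=M=K_0$, $S_k=K_k$ for $1\le k\le n$, and $S_{n+1}=N$; I need one more morphism $\th_{n+1}\colon K_n\to N[1]$ — but I actually want to detect all of $\Ext^m$. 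The clean formulation: a class $\xi\in\Ext^m_R(M,N)=\Hom_{D(R)}(M,N[m])$ factors, via the dimension-shifting isomorphisms $\Ext^m_R(M,N)\cong\Ext^{m-n}_R(K_n,N)$... this needs $m-n\ge 1$, true. Actually the cleanest is: $\xi\colon M\to N[m]$ equals the composite $K_0\xra{\th_1}K_1[1]\xra{\th_2[1]}K_2[2]\to\cdots\xra{\th_n[n-1]}K_n[n]\xra{\bar\xi[n]}N[m]$ where $\bar\xi\in\Hom_{D(R)}(K_n,N[m-n])=\Ext^{m-n}_R(K_n,N)$ is the image of $\xi$ under repeated dimension shifting; this is the standard fact that the $\Ext$ class is the Yoneda composite of the syzygy extensions with a class in lower $\Ext$. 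So, with $S_0=M$, $S_k=K_k$ ($1\le k\le n$), $S_{n+1}=N[m-n-1]$ (so that $S_{n+1}[n+1]=N[m]$), and the morphisms $\th_1,\dots,\th_n$ the syzygy connecting maps and $\th_{n+1}=\bar\xi[-1]\colon K_n\to N[m-n]=S_{n+1}[1]$, part (1) applies and shows that $f\cdot\big((\th_{n+1}\cdots\th_1)_*(\id_M)\big)=0$, i.e. $f\cdot\xi=0$. Since $\xi$ was arbitrary, $f\cdot\Ext^m_R(M,N)=0$.

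\medskip

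\textbf{Main obstacle.} The only real subtlety is setting up Part~(2) correctly: expressing an arbitrary $\Ext^m$ class as a composite of $n$ syzygy connecting morphisms followed by a single residual morphism, and making sure the degree bookkeeping ($S_{n+1}[n+1]=N[m]$ forces $S_{n+1}=N[m-n-1]$) is consistent with the hypothesis $m>n$. Part~(1) is a routine Ghost-Lemma-plus-octahedron argument once one observes that every module-to-shifted-module map is automatically ghost for $R$. I would also remark that (1) does not actually need $M$ to be a module — only $M\in_f\langle R\rangle_n$ — which is why it is stated separately.
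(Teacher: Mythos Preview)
Your argument is correct and follows the paper's approach. For (1), both you and the paper observe that each $\theta_k$ is ghost for $R$ and then combine the Ghost Lemma with the triangle $P\to M\oplus M'\to C$; your kernel-of-$\phi^*$ step is exactly the paper's diagram chase (modulo a harmless $Z$ versus $Z[1]$ slip). For (2) there is one presentational wrinkle: you invoke (1) with $S_{n+1}=N[m-n-1]$, which is not an $R$-module when $m>n+1$, so the hypotheses of (1) as stated are not literally met --- your own proof of (1) only uses the ghost property and extends immediately, but the paper avoids the issue altogether by factoring $\xi$ as a full Yoneda composite $M=S_0\to S_1[1]\to\cdots\to S_m[m]=N[m]$ with every $S_i$ a genuine module, applying (1) to the first $n+1$ maps, and then post-composing with $\theta_{n+2},\ldots,\theta_m$.
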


\begin{proof}
1. Denote the composition $\theta_{n+1}\ldots \theta_1$ by $\theta$. By definition, there exists an object $M'\in D^b(R\MMod)$ and a distinguished triangle $P\to M\oplus M'\to C\to P[1]$ such that $f\cdot C=0$ and $P\in [R]_n$. Note that the map 
\begin{equation}
\label{eq_zeromap}
\Hom^i(R,S_k[k])\to \Hom^i(R,S_{k+1}[k+1]),
\end{equation} 
induced by $\theta_k$ is zero for any $i$. Indeed, either the source or the target of (\ref{eq_zeromap}) is a zero module because $S_k$ and $S_{k+1}$ are $R$-modules. Applying 
 Ghost Lemma, we get that the map $\Hom(P,S_0)\to \Hom(P,S_{n+1}[n+1])$ induced by $\theta$ is zero. Consider the commutative diagram with exact rows where the vertical maps are induced by $\theta$:
$$\xymatrix{\Hom(P,S_0)\ar[d]^0 & \Hom(M\oplus M',S_0)\ar[d]\ar[l] & \Hom(C,S_0)\ar[d]
 \ar[l] \\
 \Hom(P,S_{n+1}[n+1]) & \Hom(M\oplus M',S_{n+1}[n+1])\ar[l] &\Hom(C,S_{n+1}[n+1]). \ar[l].
}$$
Since $f\cdot C=0$, a diagram chase shows that the image of the middle vertical arrow is annihilated by $f$.
Clearly, it follows that the image of $\Hom(M,S_0)\to \Hom(M,S_{n+1}[n+1])$ is annihilated by $f$.

2. Recall that any element $\xi\in \Ext^m(M,N)=\Hom(M,N[m])$ can be represented as the composition of some maps $M=S_0\xra{\theta_1} S_1[1]\xra{\theta_2} S_2[2]\to\ldots \xra{\theta_m} S_{m}[m]=N[m]$ for some $R$-modules~$S_i$. Clearly, the composition $\theta_{n+1}\ldots\theta_1$ is the image of $1_M$ under the map $\Hom(M,S_0)\to \Hom(M,S_{n+1}[n+1])$ and by part 1 is annihilated by $f$. Consequently, $\xi=\theta_m\ldots\theta_{n+1}\ldots\theta_1$ is also annihilated by $f$ for $m>n$.
\end{proof}

The following statement is standard, it will be used frequently in the forthcoming arguments. We recall the proof for the benefit of the reader.
\begin{lemma}
\label{lemma_ABC}
Let $\TT$ be a triangulated category linear over a commutative ring $R$. Let $A\to B\to C\to A[1]$ be a distinguished triangle in $\TT$. Suppose $A$ is annihilated by $f_A$ and 
$B$ is annihilated by $f_B$, where $f_A,f_B\in R$. Then $C$ is annihilated  by $f_Af_B$.
\end{lemma}
\begin{proof}
Consider the commutative diagram
$$\xymatrix{
B \ar[r]^b \ar[d]_{f_A} & C \ar[r]^c \ar[d]^{f_A} \ar@{-->}[ld]^{b'} & A[1] \ar[d]^{f_A} \\
B \ar[r]_b \ar[d]_{f_B} & C \ar[r]^c \ar[d]^{f_B} & A[1] \ar[d]^{f_B} \ar@{-->}[ld]^{c'} \\
B \ar[r]^b              & C \ar[r]_c              & A[1]. 
}$$
By the properties of a triangulated category, since $cf_A=f_Ac=0\in\Hom(C,A[1])$ there is $b'\in \Hom(C,B)$ such that $bb'=f_A$.  Similarly, there is $c'\in \Hom(A[1],C)$ such that $c'c=f_B$. It follows that 
$$f_Bf_A=c'cbb'=0$$
as morphisms from $C$ to $C$.
\end{proof}

\begin{lemma}
\label{lemma_main}
Let $R$ be a commutative Noetherian integral domain and $G\in D^b(R\mmod)$ be an object.
Then there exists a non-zero element $f\in R$ and a function $g\colon \Z_{\ge 0} \to \Z_{\ge 0}$ such that for any $s\ge 0$ and any object $M_s\in [G]_s$ one has $M_s\in_{f^{g(s)}}[R]_s$. In other words, for any $s\ge 0$ and any object $M_s\in [G]_s$ there exists an object $P_s\in [R]_s$ and a morphism $\phi_s \colon P_s\to M_s$ such that $Cone(\phi _s)$ is annihilated by $f^{g(s)}$.
\end{lemma}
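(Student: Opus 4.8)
\emph{Strategy and base case.} The natural approach is induction on $s$, reducing everything to: a base case $s=0$ (which also fixes the element $f$), and one inductive step that upgrades approximation data for $[G]_s$ to approximation data for $[G]_{s+1}$. For the base case, let $K=\Frac(R)$. Since $K$ is a field, $G\otimes^L_R K$ is isomorphic in $D^b(K\mmod)$ to $\oplus_i H^i(G\otimes^L_R K)[-i]$. Put $b_i=\dim_K H^i(G\otimes^L_R K)=\rank_R H^i(G)$; for each $i$ choose $v_{i,1},\dots,v_{i,b_i}\in H^i(G)=\Hom(R[-i],G)$ whose images form a $K$-basis of $H^i(G)\otimes_R K$, and let $\phi\colon P\to G$ be the map out of $P=\oplus_{i,j}R[-i]\in[R]_0$ assembled from the corresponding morphisms $R[-i]\to G$. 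Then $\phi\otimes^L_R K$ induces an isomorphism on all cohomology, hence is an isomorphism, so $Cone(\phi)$ has finitely generated torsion cohomology and is therefore annihilated by some non-zero $f\in R$ (take a common non-zero annihilator of the finitely many modules $H^i(Cone(\phi))$ and raise it to the number of non-zero cohomological degrees, using the elementary fact that in a triangle $A\to B\to C\to A[1]$ with $\mathrm{id}_A$ killed by $a$ and $\mathrm{id}_C$ killed by $c$, $\mathrm{id}_B$ is killed by $ac$). For a general $M_0=\oplus_k G[m_k]\in[G]_0$ one takes the direct sum of the shifted copies of $\phi$, whose cone is still killed by $f$. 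Thus $g(0)=1$ works with this $f$, which we now fix for the rest of the argument.

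\emph{Inductive step.} Suppose $g(s)$ has been constructed and let $M_{s+1}\in[G]_{s+1}$, with defining (rotated) triangle $M_0'[-1]\xrightarrow{w[-1]}M_s'\to M_{s+1}\to M_0'$, where $M_s'\in[G]_s$ and $M_0'\in[G]_0$. Take $\psi_s\colon P_s\to M_s'$ with $P_s\in[R]_s$ and $f^{g(s)}\cdot Cone(\psi_s)=0$, and $\psi_0\colon P_0\to M_0'$ with $P_0\in[R]_0$ and $f\cdot Cone(\psi_0)=0$. The key point is that one cannot in general lift $w[-1]\circ\psi_0[-1]\colon P_0[-1]\to M_s'$ along $\psi_s$, but its $f^{g(s)}$-multiple \emph{does} lift: the composite $P_0[-1]\to M_s'\to Cone(\psi_s)$ is killed by $f^{g(s)}$, so $f^{g(s)}\cdot(w[-1]\circ\psi_0[-1])=\psi_s\circ\tilde w$ for some $\tilde w\colon P_0[-1]\to P_s$. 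Setting $\psi_0''=f^{g(s)}\psi_0$ we obtain a commuting square, which we extend to a morphism of triangles from $P_0[-1]\xrightarrow{\tilde w}P_s\to P_{s+1}\to P_0$ (so that $P_{s+1}\in[R]_{s+1}$ by the very definition of $[R]_{s+1}$) to the $M$-triangle, with third component $\psi_{s+1}\colon P_{s+1}\to M_{s+1}$; by the $3\times3$-lemma (a standard consequence of the octahedral axiom) we may choose this so that $Cone(\psi_s)\to Cone(\psi_{s+1})\to Cone(\psi_0'')\to Cone(\psi_s)[1]$ is a triangle. Finally, writing $\psi_0''=\psi_0\circ(f^{g(s)}\mathrm{id}_{P_0})$ and applying the octahedron, $Cone(\psi_0'')$ sits in a triangle with $Cone(\psi_0)$ (killed by $f$) and $Cone(f^{g(s)}\mathrm{id}_{P_0})\simeq P_0\otimes^L_R Cone(R\xrightarrow{f^{g(s)}}R)\simeq P_0\otimes^L_R R/f^{g(s)}R$ (killed by $f^{g(s)}$; here we use that $f$ is a nonzerodivisor since $R$ is a domain), hence $Cone(\psi_0'')$ is killed by $f^{g(s)+1}$, and then $Cone(\psi_{s+1})$ is killed by $f^{g(s)}\cdot f^{g(s)+1}=f^{2g(s)+1}$. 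So $g(s+1)=2g(s)+1$ works, and the recursion gives, e.g., $g(s)=2^{s+1}-1$.

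\emph{Main obstacle.} The delicate part is the inductive step, and specifically the fact that the connecting morphism $w[-1]$ of the triangle presenting $M_{s+1}$ need not lift along the chosen approximation $\psi_s$ of $M_s'$ — only its $f^{g(s)}$-multiple does. This forces one to build the approximating triangle for $M_{s+1}$ over the twisted map $\psi_0''=f^{g(s)}\psi_0$ rather than over $\psi_0$ itself, and hence to control $Cone(\psi_0'')$; this is exactly where the domain hypothesis (nonzerodivisors, and ``torsion $=$ annihilated by a non-zero element'') enters, together with careful cone/octahedron bookkeeping. The organizing principle that makes this bookkeeping manageable is to keep the single element $f$ fixed for all $s$ and let only the exponent $g(s)$ grow.
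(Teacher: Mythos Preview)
Your proof is correct and follows essentially the same approach as the paper's: the same base-case construction of a map $P\to G$ with $P\in[R]_0$ and torsion cone (you phrase it via $\Frac(R)$-bases, the paper via an injection $P\hookrightarrow H^\bullet(G)$ with torsion cokernel, but the content is identical), the same lifting trick of replacing $\psi_0$ by $f^{g(s)}\psi_0$ so that the square closes, the same $3\times3$/octahedron bookkeeping, and the same recursion $g(s+1)=2g(s)+1$.
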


\begin{remark}
In fact, our proof shows that one can take $g(s)=2^{s+1}-1$.
\end{remark}

\begin{proof}
Choose a graded module $P$ that is a direct sum of shifted free $R$-modules of finite rank and an injective morphism $\alpha \colon P\to H^\bullet (G)$,
such that $\coker (\alpha)$ is a torsion $R$-module, i.e. $\ann(\coker(\alpha))\neq 0$.
(Such $\alpha $ exists since $R$ is a domain).
We consider $P$ as a complex of $R$-modules with zero differential, and choose a morphism $\tilde{\alpha}\colon P\to G$
of complexes, such that $\alpha =H^\bullet (\tilde{\alpha})$. Then any cohomology of $Cone(\widetilde\alpha)$ is annihilated by the (nonzero) ideal $\ann(\coker(\alpha))$. Since $Cone(\widetilde\alpha)$ is bounded it follows from Lemma~\ref{lemma_ABC} that $Cone(\widetilde\alpha)$ itself is annihilated (as an object of $D^b(R\mmod)$) by some power
$\ann(\coker(\alpha))^t$ of this ideal. Choose any nonzero element $f\in \ann(\coker(\alpha))^t$. We claim that there exists a function $g\colon \Z_{\ge 0} \to \Z_{\ge 0}$ such that $f$ and $g$ have the desired properties.

We proceed by induction on $s$. For $s=0$ any object $M_0\in [G]_0$ is a finite direct sum of shifts of $G$, hence we can take for $\phi _0$ a direct sum of shifts of the morphism $\tilde{\alpha }\colon P\to G$ chosen above (clearly $P\in [R]_0$). Then the cone of $\phi_0$ is a direct sum of shifts of $Cone(\widetilde\alpha)$ and thus  annihilated by $f$.
We put $g(0)=1$.

Assume that we proved the assertion for $s-1$.
Let $M_s\in [G]_s$ be the cone of a morphism $\psi \colon M_0\to M_{s-1}$ with $M_0\in [G]_0$ and $M_{s-1}\in [G]_{s-1}$.
Consider the diagram of morphisms of complexes (denoted by solid arrows)
$$\xymatrix{
P_{s-1} \ar[rr]^{\phi _{s-1}} && M_{s-1} \\
P_{0} \ar@{-->}[u]^{\beta} \ar[rr]^{ \phi _{0}} && M_{0}\ar[u]^{\psi}
}
$$
provided by the induction hypothesis.
If there existed a morphism $\beta \colon P_0\to P_{s-1}$ which complemented the above diagram to a commutative square,
then there would exist a morphism $\gamma \colon Cone(\beta)\to M_s=Cone(\psi)$ satisfying the requirement of the lemma for an appropriate value $g(s)\in \Z_{\ge 0}$.

We claim that there exists a morphism $\beta \colon P_0\to P_{s-1}$ which completes the above diagram with $\phi_0$ replaced by $f^{g(s-1)}\phi_0$. Indeed, consider the exact sequence
$$\Hom(P_0,P_{s-1})\to \Hom(P_0,M_{s-1})\stackrel{\delta}{\to }\Hom(P_0,Cone(\phi_{s-1}))$$
The element $f^{g(s-1)}\psi\phi_0\in \Hom(P_0,M_{s-1})$ maps to zero by $\delta$ because $\delta (\psi\phi_0)\in \Hom(P_0,Cone(\phi_{s-1}))$ and $Cone(\phi_{s-1})$ is annihilated by $f^{g(s-1)}$ by the induction hypothesis. Therefore there exists $\beta\colon P_0\to P_{s-1}$ such that $\phi_{s-1}\beta=f^{g(s-1)}\psi\phi_0$.
We obtain the commutative diagram
$$\xymatrix{
P_0[1] && M_0[1] && C_{0,s}[1] && \\
P_{s} \ar[rr]^{\phi_s}\ar[u] && M_s \ar[rr]\ar[u] && C_s \ar[rr] \ar[u] && P_s[1] \\
P_{s-1} \ar[rr]^{\phi _{s-1}} \ar[u] && M_{s-1} \ar[u] \ar[rr] && C_{s-1}\ar[u]\ar[rr] && P_{s-1}[1]\\
P_{0} \ar[u]^{\beta} \ar[rr]^{f^{g(s-1)} \phi _{0}} && M_{0}\ar[u]^{\psi} \ar[rr]  && C_{0,s} \ar[u]\ar[rr] && P_0[1],
}
$$
where all rows and columns are distinguished triangles, see \cite[Prop. 1.1.11]{BBD}. Clearly, $P_s\in [R]_s$.
Let us check that $C_s$ is annihilated by a certain power of $f$. By induction hypothesis, $C_{s-1}$ is annihilated by $f^{g(s-1)}$. The map $f^{g(s-1)}\cdot \phi_0\colon P_0\to M_0$ is the composition
$$P_0\xra{f^{g(s-1)}} P_0\xra{\phi_0} M_0.$$
By the octahedron axiom, there exists a distinguished triangle
$$Cone(f^{g(s-1)})\to C_{0,s}\to Cone(\phi_0)\to Cone(f^{g(s-1)})[1].$$
Clearly the object $Cone(f^{g(s-1)})$ is annihilated by $f^{g(s-1)}$. Indeed, $P_0$ is a direct sum of shifted free $R$-modules of finite rank and $R$ is a domain; therefore the object $Cone(f^{g(s-1)})$ is isomorphic to the direct sum of its cohomologies which are annihilated by $f^{g(s-1)}$. Also $Cone(\phi_0)$ is annihilated by $f$ as pointed out in the beginning of the proof. Therefore
$f^{g(s-1)+1}\cdot C_{0,s}=0$ by Lemma~\ref{lemma_ABC}. Finally, Lemma~\ref{lemma_ABC} implies that $Cone(\phi_s)$ is annihilated by $f^{2g(s-1)+1}$, and we put $g(s):=2g(s-1)+1$.

This completes the induction step and proves the lemma.
\end{proof}

\begin{theorem}
\label{theorem_main}
 Let $R$ be a commutative Noetherian integral domain, let $G\in  D^b(R\mmod)$ be an object and $\{M_i\}_{i\in I}$ be a family of finitely generated $R$-modules such that $M_i\in\langle G \rangle_n$ for some $n$ and any $i$. Suppose there exist an integer $d\ge 1$ and a family of $R$-modules $\{N_i\}_{i\in I}$ such that $\cap_i \ann\Ext^d(M_i,N_i)=0$.   Then $n\ge d$.
\end{theorem}
\begin{proof} By definition for any module $M_k$ there exists some object $M'_k\in D^b(R\mmod)$ such that $M_k\oplus M'_k\in [G]_n$. We apply Lemma~\ref{lemma_main}: there exists a non-zero element $f$, such that $M_k\oplus M'_k\in_{f} [R]_n$ for any~$k$. It follows that $M_k\in_f \langle R\rangle_n$ for any $k$.  By Lemma~\ref{lemma_sss} we get that $f\cdot \Ext^m(M_k,N_k)=0$ for any $k$ and any $m>n$. In particular, $\cap_k \ann \Ext^m(M_k,N_k)\ne 0$. By our assumptions, we deduce that $d\le n$.
\end{proof}

\begin{corollary}
\label{cor_bounddim} 
Let $R$ be a commutative Noetherian integral domain. Let $\TT$ be a regular triangulated category with $\Rdim \TT =n$, and $F\colon \TT \to D^b(R\mmod)$ be a triangulated functor such that the essential image of $F$ contains a family $\{M_i\}_{i\in I}$ of finitely generated $R$-modules. Suppose there exist an integer $d\ge 1$ and a family of $R$-modules $\{N_i\}_{i\in I}$ such that $\cap_i \ann\Ext^d(M_i,N_i)=0$.  Then $d\le n$.
\end{corollary}

\begin{proof} Let $E\in \TT$ be an object such that $\TT =\langle E\rangle _n$. Denote the object $F(E)\in D^b(R\mmod)$ by $G$. Since $\TT =\langle E\rangle _n$ it follows that every object in the image of the functor $F$ belongs to the subcategory $\langle G\rangle _n\in D^b(R\mmod)$. In particular $M_i\in \langle G\rangle _n$. Now the assertion follows from Theorem \ref{theorem_main}.
\end{proof}

Proposition \ref{lemma_exafatext} gives an example where the assumptions of Theorem \ref{theorem_main} are fulfilled.
We start with a lemma.

\begin{lemma}
\label{lemma_matsumura}
Let $R$ be a local ring. Consider a regular sequence of elements $x_1,\ldots,x_n$ in the maximal ideal of $R$. Let $d_1,\ldots,d_n$ be positive integers. Then the sequence $x_1^{d_1},\ldots,x_n^{d_n}$ is also regular. Therefore if we put  $I_k:=(x_1^k,\ldots,x_n^k)$, then one has
$$\Ext^n(R/I_k,R)=R/I_k,\quad \Ext^i(R/I_k,R)=0\quad\text{for}\quad i\ne n.$$
\end{lemma}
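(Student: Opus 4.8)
The plan is to prove the two assertions in turn: first that $x_1^{d_1},\dots,x_n^{d_n}$ is again a regular sequence, and then to read off the $\Ext$-modules from the Koszul resolution.

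For the first assertion I would use a double induction. The outer induction is on $n$: since $x_1^{d_1}$ is a non-zerodivisor on $R$ (a power of a non-zerodivisor is one), it is enough to show that $x_2^{d_2},\dots,x_n^{d_n}$ is a regular sequence on $\bar R:=R/(x_1^{d_1})$, and by the inductive hypothesis applied to the local ring $\bar R$ this follows as soon as we know $x_2,\dots,x_n$ is an $\bar R$-regular sequence. Thus the whole matter reduces to the statement: \emph{if $x_1,\dots,x_n$ is $R$-regular then $x_2,\dots,x_n$ is regular on $R/(x_1^{d})$ for every $d\ge 1$}. This I would prove by an inner induction on $d$. The case $d=1$ is part of the hypothesis. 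For the step $d\to d+1$ I would use the short exact sequence
$$0\to R/(x_1)\xra{\;\cdot x_1^{d}\;} R/(x_1^{d+1})\to R/(x_1^{d})\to 0,$$
whose exactness is immediate once one knows that $x_1^d$ is a non-zerodivisor; the sequence $x_2,\dots,x_n$ is regular on the outer terms by the case $d=1$ and by the inner inductive hypothesis.

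What remains — and this is the one genuinely technical point — is the elementary lemma that in a short exact sequence $0\to A\to B\to C\to 0$ of $R$-modules, a sequence $y_1,\dots,y_m$ regular on both $A$ and $C$ is also regular on $B$. I would prove this by induction on $m$: for $m=1$ a short diagram chase shows $y_1$ is a non-zerodivisor on $B$, and then the snake lemma applied to multiplication by $y_1$ gives a short exact sequence $0\to A/y_1A\to B/y_1B\to C/y_1C\to 0$ to which the inductive hypothesis applies; the non-degeneracy requirement $B/(y_1,\dots,y_m)B\ne 0$ holds because this module surjects onto $C/(y_1,\dots,y_m)C\ne 0$. Everything else in the first assertion is bookkeeping of these inductions, and the passage to $\bar R$ at each stage keeps us inside the class of local rings.

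For the second assertion, once $x_1^k,\dots,x_n^k$ is known to be an $R$-regular sequence, the Koszul complex $K_\bullet(x_1^k,\dots,x_n^k;R)$ is a finite free resolution of $R/I_k$. Hence $\Ext^i_R(R/I_k,R)$ is the $i$-th cohomology of $\Hom_R(K_\bullet,R)$, and by the self-duality of the Koszul complex this cohomology is $H_{n-i}(K_\bullet(x_1^k,\dots,x_n^k;R))$, which vanishes for $i\ne n$ and equals $H_0=R/I_k$ for $i=n$; in particular $\Ext^i_R(R/I_k,R)=0$ for $i>n$ since the complex is concentrated in degrees $0,\dots,n$. Alternatively one can avoid Koszul complexes and obtain the same answer by dimension shifting along the short exact sequences $0\to R/(x_1^k,\dots,x_{j-1}^k)\xra{\cdot x_j^k} R/(x_1^k,\dots,x_{j-1}^k)\to R/(x_1^k,\dots,x_j^k)\to 0$ together with an induction on $n$. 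This completes the proof.
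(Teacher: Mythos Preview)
Your proof is correct. The paper's own proof consists only of a citation to Matsumura \cite[Thm 16.1]{Ma} for the first assertion and the phrase ``consider the Koszul complex of $R/I_k$'' for the second, so your argument is a valid and fully detailed elaboration of exactly the same approach.
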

\begin{proof} For the first statement see \cite[Thm 16.1]{Ma}. For the second one consider the Koszul complex of $R/I_k$.
\end{proof}

\begin{prop}
\label{lemma_exafatext}
Let $R$ be a commutative Noetherian integral domain and $\eta\in\Spec R$ be a point. Consider the corresponding local ring $R_\eta$ with the maximal ideal $\m$. Let $M_i, i\in I$ be a family of finitely generated $R$-modules such that among localizations $(M_i)_{\eta}$ there appear quotients $R_{\eta}/I$ for all  $\m$-primary ideals $I\subset R_\eta$. Let $d$ be the depth of the ring $R_{\eta}$. Then a condition from Theorem~\ref{theorem_main} is fulfilled (with $N_i=R$):
$$\cap_i \ann \Ext^d(M_i,R)=0.$$
\end{prop}
\begin{proof}
Choose a regular sequence $x_1,\ldots,x_d$ of elements in the maximal ideal $\m$ of $R_{\eta}$.
For any $k\ge 0$, let $I_k=(x_1^k,\ldots,x_d^k)\subset R_{\eta}$. By Lemma~\ref{lemma_matsumura},
$\Ext^d(R_{\eta}/I_k,R_{\eta})=R_{\eta}/I_k$. By assumptions, for some $i(k)\in I$ we have
$(M_{i(k)})_{\eta}\simeq R_{\eta}/I_k$ and
$$\Ext^d_R(M_{i(k)},R)_{\eta}=\Ext^d_{R_{\eta}}((M_{i(k)})_{\eta},R_{\eta})=\Ext^d_{R_{\eta}}(R_{\eta}/I_k,R_{\eta})=R_{\eta}/I_k.$$
Therefore,
\begin{multline*}
(\cap_i \ann \Ext^d(M_i,R))_{\eta}\subset \cap_k (\ann \Ext^d(M_{i(k)},R))_{\eta}=\cap_k \ann (\Ext^d(M_{i(k)},R)_{\eta})=\\
=\cap_k I_k\subset \cap_k \m^k=0,
\end{multline*}
where the last equality is due to Krull's Intersection Theorem.
Since $R$ is a domain, it follows that  $\cap_i \ann \Ext^d(M_i,R)=0$.
\end{proof}

As another application of the developed technique one can deduce non-triviality of the cohomological annihilator for rings $R$ such that the category $D^b(R\mmod)$ is regular.
Recall that the ideals $ca_k(R)\subset R$, $k\ge 0$
are defined by
$$ca_k(R)=\cap_{M,N}\ann \Ext^k_R(M,N),$$
where the intersection is taken over all finitely generated $R$-modules $M,N$.
Since any element in $\Ext^k_R(M,N)$ is the product of some elements $\xi\in \Ext^{k-1}(M,N')$ and $\xi'\in \Ext^1(N',N)$, one has 
$$ca_1(R)\subset ca_2(R)\subset \ldots \subset ca_k(R)\subset \ldots \subset ca(R),$$
where the ideal $ca(R)$ is defined   as $\cup_i ca_i(R)$.
\begin{theorem}
\label{theorem_ca}
Let $R$ be a commutative Noetherian integral domain such that the category $D^b(R\mmod)$ is regular (see Example~\ref{example_example}). Let $d=\Rdim(D^b(R\mmod))$. Then there exists a non-zero element $f\in R$ such that $\Ext_R^k(M,N)$ is annihilated by $f$ for any finitely generated $R$-modules $M$ and $N$ and any $k>d$. In particular, $ca_{d+1}(R)\ne 0$.
\end{theorem}
\begin{proof}
By our assumption there exists a generator $G\in D^b(R\mmod)$ such that $\langle G\rangle_d=D^b(R\mmod)$. By Lemma~\ref{lemma_main}, there exists a non-zero element $f\in R$ such that for any $R$-module $M$ one has $M\in_f \langle R\rangle_d$. Then by Lemma~\ref{lemma_sss}, we have $f\cdot \Ext^k_R(M,N)=0$ for any finitely generated $R$-modules $M,N$ and $k>d$.
\end{proof}

Of course, such results are  not new (for example, see \cite{IT2} where strong generation of abelian categories of modules is studied).
What seems to be interesting is the estimate: $ca_{d+1}(R)\ne 0$ where $d$ is the Rouquier dimension of $D^b(R\mmod)$.

\section{Point-like objects in regular categories}
\label{section_pointlike}

Fix a field $\kk$.

\begin{definition}
\label{definition_pointlike}
Let $\TT$ be an enhanced $\kk$-linear triangulated category. That is, there is given a pre-triangulated $\kk$-linear dg category $\AAA$ and an equivalence of triangulated categories $\TT \simeq [\AAA]$, where $[\AAA]$ is the homotopy category of $\AAA$, see \cite{BLL} for details. An object $E$ of $\TT$ is said to be
\emph{point-like} of dimension $r$ if the dg-algebra $\End_{\AAA}(E)$ is quasi-isomorphic to the exterior algebra
$\Lambda^{\bul}(V_r)$ of some $r$-dimensional $\kk$-vector space $V_r$. Here $\Lambda^{\bul}(V_r)$ is considered as a dg-algebra with zero differential and $\deg(V_r)=1$.
\end{definition}

Example: if $x\in X$ is a nonsingular point of a $\kk$-scheme $X$ with the residue field~$\kk$, then the skyscraper sheaf $\OO_x$ is a point-like object of $D^b(\coh X)$ of dimension equal to the dimension of $X$ at $x$, see~\cite[Prop. 5.2.]{ELO}.

Our goal in this section is to demonstrate that the dimension of point-like objects in a triangulated category is bounded above by the Rouquier dimension of the category.

Recall that a triangulated $\kk$-linear category $\TT$ is called \emph{$\Ext$-finite} if for any $E,F\in\TT$ one has $\dim_{\kk}\oplus_i\Hom(E,F[i])<\infty$.

\begin{theorem}\label{theorem_point}
Let $\TT$ be an enhanced $\kk$-linear $\Ext$-finite triangulated category. Suppose~$\TT$ has  finite Rouquier dimension $n$ and contains a point-like object $E$ of dimension $r$. Then $r\le n$.
\end{theorem}
\begin{proof} We may assume that the category $\TT$ is Karoubian.
Let $\AAA$ be the enhancement of $\TT$.
Denote by $\Lambda=\Lambda^{\bul}(V_r)$ the dg-algebra with zero differential and $\deg(V_r)=1$ which is quasi-isomorphic to $\End_{\AAA}(E)$.
Choose a quasi-isomorphism of dg algebras $\End_{\AAA}(E)\simeq \Lambda$. This induces an equivalence of the corresponding derived categories  $D(\End _{\AAA}(E))\simeq D(\Lambda)$. Let $D_{fg}(\Lambda)\subset D(\Lambda)$ be the full triangulated subcategory of dg modules with finite dimensional total cohomology.

We have the triangulated functor
$$\Phi\colon \TT\to D(\End _{\AAA}(E))\simeq D(\Lambda),\quad E\mapsto \Hom_{\AAA}(E,-).$$
Since $\TT$ is $\Ext$-finite, the functor $\Phi$ has its image in $D_{fg}(\Lambda)$.

Note that the functor $\Phi$ restricts to an equivalence between the subcategories $\langle E\rangle\subset \TT$ and $\Perf(\Lambda)\subset D_{fg}(\Lambda)$. Indeed, $\Phi(E)=\Lambda$,
and the map 
$\Phi \colon\Hom^\bullet (E,E)\to \Hom ^\bullet (\Lambda,\Lambda)$
is by definition an isomorphism. Hence $\Phi \vert _{\langle E\rangle}$ is full and faithful. Since both categories $\langle E\rangle$ and $\Perf (\Lambda)$ are Karoubian,
it is indeed an equivalence.

Consider $\kk$ as a right $\Lambda$-dg-module (where $\Lambda_{>0}$ acts by zero) and choose its h-projective resolution $P\to \kk$. Then $\Hom ^i _{D(\Lambda)}(\kk ,\kk)=H^i(\End (P))$.
By Lemma \ref{lemma_dgmod} below, one has $\Hom_{D(\Lambda)}^0(\kk,\kk)\simeq \kk[[t_1,...,t_r]]$ for $i=0$ and $\Hom_{D(\Lambda)}^i(\kk,\kk)=0$ otherwise.
Put
$$A=\kk[[t_1,...,t_r]].$$
This is a commutative Noetherian regular $\kk$-algebra. We have a quasi-isomorphism of dg algebras $\End (P)\simeq A$ and hence an equivalence of the corresponding derived categories $D(\End (P))\simeq D(A)$.
Consider the functor (Koszul duality)
$$K\colon D(\Lambda)\to D(\End (P))\simeq D(A)$$
induced by the dg-functor $\Hom(P,-)$ from the category of $\Lambda $-dg-modules to the category of $\End (P)$-dg-modules.

Recall that $D^b_{fg}(A)$ denotes the full triangulated subcategory of $D(A)$ consisting of complexes with finitely generated (over $A$) total cohomology. 
We claim that $K$ restricts to an equivalence between $D_{fg}(\Lambda)$ and $D^b_{fg}(A)$. 
Indeed, the category $D_{fg}(\Lambda)$ is classically generated by the module $\kk$ (see~\cite[Lemma 7.35]{Rou}).  Also, it follows from the definition that  $D_{fg}(\Lambda)$ is Karoubian.
Further, $K(\kk)=A$ and $\Hom^i_{D(\Lambda)}(\kk,\kk)=\Hom^i_{D(A)}(A,A)$ for all $i$.
It follows by standard argument that $K$ is fully faithful on $D_{fg}(\Lambda)$. It remains to note that $D^b_{fg}(A)$ coincides with $\Perf(A)$ and thus is classically generated by $A$. Also $D^b_{fg}(A)$ is Karoubian. Therefore
$$K\colon D_{fg}(\Lambda)\to D^b_{fg}(A)$$
is an equivalence.

We observe that $K(\Lambda)=\Hom^{\bul}(\kk,\Lambda)=\kk[-r]$. Indeed, $\Lambda$ is an h-injective $\Lambda$-dg-module, hence $$\Hom^{\bul}_{D(\Lambda)}(\kk,\Lambda)=H^{\bul}(\Hom_{\Lambda}(\kk,\Lambda))=\kk[-r].$$
It follows that $K$ restricts to an equivalence between the subcategory $\Perf(\Lambda)\subset D_{fg}(\Lambda)$ and the subcategory $D^b_{fg,0}(A)\subset D^b_{fg}(A)$ which consists of objects supported at the maximal ideal $\m \subset A$.

Consider the composite functor
$$\TT\xra{\Phi} D_{fg}(\Lambda)\xra{K} D^b_{fg}(A).$$
It restricts to an equivalence
$$K\Phi\colon \langle E\rangle \simeq  D^b_{fg,0}(A).$$
Let $G\in \TT$ be a generator such that $\TT=\langle G\rangle_n$.
In particular, for any object $C$ in $\langle E\rangle$ one has $C\in \langle G\rangle_n$.
Applying $K\Phi$, we get that any object in $D^b_{fg,0}(A)$ belongs to $\langle K\Phi(G)\rangle_n$. Therefore for any $\m$-primary ideal $I\subset A$ the $A$-module $A/I$ is in
$\langle K\Phi(G)\rangle_n$.
By Proposition~\ref{lemma_exafatext} we have
$$\bigcap_{I\,\text{is $\m$-primary}} \ann \Ext^r(A/I,A)=0.$$
By Theorem~\ref{theorem_main} we get that $n\ge r$.
\end{proof}

\begin{lemma} \label{lemma_dgmod} 
Let $V_r$ and $\Lambda=\Lambda^{\bul}(V_r)$ be as in Definition~\ref{definition_pointlike}.
Consider the right $\Lambda$-dg-module $\kk$ (where $\Lambda _{>0}$ acts by zero). Then
$$\Hom^0_{D(\Lambda)}(\kk,\kk)\simeq k[[t_1,...,t_r]],\qquad 
\Hom^i_{D(\Lambda)}(\kk,\kk)=0\quad\text{for}\quad i\neq 0,$$
where the first isomorphism is an isomorphism of algebras.
\end{lemma}

\begin{proof} 
Consider the free supercommutative graded algebra:
$$S^{\bul}(V_r\oplus V_r[-1])\simeq 
S^{\bul}V_r\otimes_{\kk} \Lambda\simeq \bigoplus_{i=0}^{\infty}S^{i}V_r\otimes_{\kk} \Lambda\simeq \kk[x_1,\ldots,x_d]\otimes_{\kk} \kk\langle \xi_1,\ldots,\xi_d\rangle 
,$$
where $x_1,\ldots,x_r$ is a basis for $V_r$, $\deg x_i=0$, 
$\xi_1,\ldots,\xi_r$ is a basis for $V_r[-1]$, $\deg \xi_i=1$. 
For $i=1,\ldots,r$ define $\Lambda$-linear operators $\partial_i\colon S^{\bul}V_r\otimes_{\kk} \Lambda\to S^{\bul}V_r\otimes_{\kk} \Lambda$ by 
$$\partial_i(x_1^{a_1}\cdot\ldots \cdot x_i^{a_i}\cdot\ldots\cdot x_r^{a_r})=\begin{cases}
x_1^{a_1}\cdot\ldots \cdot x_i^{a_i-1}\cdot\ldots\cdot x_r^{a_r}, & a_i>0;\\
0, & a_i=0.
\end{cases}$$
Define a
$\Lambda$-linear operator $d\colon S^{\bul}V_r\otimes_{\kk} \Lambda\to S^{\bul}V_r\otimes_{\kk} \Lambda$ by 
$$d=\sum_{i=1}^r \xi_i\partial_i.$$
This operator $d$ satisfies $d^2=0$, i.e. is a  differential, and makes $S^{\bul}V_r\otimes_{\kk} \Lambda$ a $\Lambda$-dg-module. (Note that $d$ is not a derivation of algebras!) As a $\Lambda$-dg-module, $S^{\bul}V_r\otimes_{\kk} \Lambda$ is free and quasi-isomorphic to $\kk$. Therefore,
\begin{multline*}
\Hom^{\bul}_{D(\Lambda)}(\kk,\kk)\simeq 
\Hom^{\bul}_{\Lambda}(S^{\bul}V_r\otimes \Lambda,S^{\bul}V_r\otimes \Lambda)= \Hom_{\Lambda}(S^{\bul}V_r\otimes \Lambda,S^{\bul}V_r\otimes \Lambda)\simeq\\
\simeq \Hom_{\Lambda}(S^{\bul}V_r\otimes \Lambda, \kk)\simeq 
\prod_{i\ge 0} S^iV_r^*\simeq 
\kk[[t_1,\ldots,t_r]],
\end{multline*}
where $t_1,\ldots,t_r\in V_r^*$ is the basis dual to $x_1,\ldots,x_r$.	

We aim to demonstrate that the isomorphism 
\begin{equation}
\label{eq_SS}
\End_{\Lambda}(S^{\bul}V_r\otimes \Lambda)\to \kk[[t_1,\ldots,t_r]]
\end{equation} 
constructed above is an isomorphism of algebras. Indeed, the maps $\partial_i\colon S^{\bul}V_r\otimes \Lambda\to S^{\bul}V_r\otimes \Lambda$ are linear over $\Lambda$, homogeneous of degree $0$ and commute with the differential~$d$. Hence $\partial_i\in \End_{\Lambda}(S^{\bul}V_r\otimes \Lambda)$. 
Further, $\partial_1,\ldots,\partial_r$ commute with each other and are locally nilpotent: for any element $y\in S^{\bul}V_r\otimes \Lambda$ there exists $k$ such that $\partial_i^k(y)=0$.
Therefore we get an algebra homomorphism 
\begin{equation}
\label{eq_SS2}
\kk[[t_1,\ldots,t_r]]\to \End_{\Lambda}(S^{\bul}V_r\otimes \Lambda)
\end{equation} 
which sends $t_i$ to $\partial_i$. 
Looking at the construction of (\ref{eq_SS}) one can check that (\ref{eq_SS}) is inverse to 
(\ref{eq_SS2}), this proves the claim.
\end{proof}


\end{document}